\newcommand{\masfR}{\mathsf R}
\newcommand{\scal}[2]{\langle #1,#2\rangle}
\newcommand{\rr}[1]{\mathbf R^{#1}}
\newcommand{\nm}[2]{\Vert #1\Vert _{#2}}
\newcommand{\nmm}[1]{\Vert #1\Vert }
\newcommand{\abp}[1]{\vert #1\vert}
\newcommand{\op}{\operatorname{Op}}
\newcommand{\sets}[2]{\{ \, #1\, ;\, #2\, \} }
\newcommand{\ep}{\varepsilon}
\newcommand{\cdo}{\, \cdot \, }
\newcommand{\wpr}{{\text{\footnotesize $\#$}}}
\newcommand{\BJ}{\operatorname{BJ}}
\newcommand{\vrum}{\vspace{0.1cm}}
\newcommand{\maclB}{\mathcal B}
\newcommand{\maclK}{\mathcal K}
\newcommand{\maclS}{\mathcal S}
\newcommand{\mascF}{\mathscr F}
\newcommand{\mascH}{\mathscr H}
\newcommand{\mascI}{\mathscr I}
\newcommand{\mascP}{\mathscr P}
\newcommand{\mascS}{\mathscr S}
\newcommand{\GL}{\mathbf{M}}
\newcommand{\UN}{\operatorname{UN}}
\newcommand{\mabfp}{\boldsymbol{p}}
\newcommand{\mabfq}{\boldsymbol{q}}
\newcommand{\sinc}{\operatorname{sinc}}
\numberwithin{equation}{section}          
\newtheorem{thm}{Theorem}
\numberwithin{thm}{section}
\newcommand{\rubrik}{}
\newtheorem{prop}[thm]{Proposition}
\newtheorem{lemma}[thm]{Lemma}
\theoremstyle{definition}
\newcommand{\rubrikdef}{}
\theoremstyle{remark}
\newtheorem{rem}[thm]{Remark}              
\author{Joachim Toft}
\address{Department of Mathematics, Linn{\ae}us University, Sweden}
\email{joachim.toft@lnu.se}
\title{Matrix parameterized pseudo-differential calculi on modulation spaces}
\keywords{matrix-parameterized calculi, ultra-distributions, Beurling,
Schatten-von Neumann, modulation spaces, quantization}
\subjclass{Primary 35S05, 47B10, 46F99, 47B37, 44A35; Secondary 42B35, 47B35}
\begin{document}

\begin{abstract}
We consider a broad matrix parameterized family of pseudo-differential
calculi, containing the usual Shubin's family of pseudo-differential calculi,
parameterized by real numbers. We show that continuity properties
in the framework of modulation space theory, valid for the Shubin's family
extend to the broader matrix parameterized family of pseudo-differential
calculi.
\end{abstract}

\maketitle

\par

\section{Introduction}\label{sec0}

\par

A pseudo-differential calculus on $\rr d$ is a rule which takes any appropriate
function or distribution, defined on the phase space $T^*\rr d\asymp \rr {2d}$
into a set of linear operators acting on suitable functions or distributions defined
on $\rr d$. There are several other situations with similar approaches. For
example, a main issue in quantum mechanics concerns quantization, where
observables in classical mechanics (which are functions or distributions on
the phase space) carry over to corresponding observables in quantum
mechanics (which usually are linear operators on subspaces of $L^2(\rr d)$).
A somewhat similar situations can be found in time-frequency analysis.
Here the phase space corresponds to the time-frequency shift space, and
the filter parameters for (non-stationary filters) are suitable functions or
distributions on the time-frequency shift space, while the corresponding
filters are linear operators acting on signals (which are functions or
distributions, depending on the time).

\par

A common family of pseudo-differential calculi concerns $a\mapsto \op _t(a)$,
parameterized by $t\in \mathbf R$. If $a\in \mascS (\rr {2d})$, then
the pseudo-differential operator $\op _t(a)$ is defined by
$$
\op _t(a)f(x) = (2\pi )^{-d}\iint _{\rr {2d}} a(x-t(x-y),\xi )f(y)e^{i\scal {x-y}\xi}\, dyd\xi ,
$$
when $f\in \mascS (\rr d)$ (cf. e.{\,}g. \cite{Sh}).

\par

In the paper we consider as in \cite{CaTo} a slightly larger family of
pseudo-differential calculi, compared
to the situations above, which are parameterized by matrices instead of the real number
$t$. More precisely, if $a\in \mascS (\rr {2d})$ and $A$ is a real $d\times d$ matrix, then
the pseudo-differential operator $\op _A(a)$ is defined by
$$
\op _A(a)f(x) = (2\pi )^{-d}\iint _{\rr {2d}} a(x-A(x-y),\xi )f(y)e^{i\scal {x-y}\xi}\, dyd\xi ,
$$
when $f\in \mascS (\rr d)$ (cf. e.{\,}g. \cite{Sh}). We note that $\op _A(a)=\op _t(a)$
when $A=t\cdot I$, where $I$ is the $d\times d$ identity matrix. On the other hand,
in \cite{Bay}, D. Bayer consider a more general situation, where each pseudo-differential
calculus is parameterized by four matrices instead of one. 

\par

The definition of $\op _A(a)$ extends in several directions. In Section \ref{sec2} we
discuss such extensions within the theory of modulation spaces. That is, we deduce
continuity for such operators between different modulation spaces, when $a$ belongs 
to (other) modulation spaces. Similar analysis and results can be found in e.{\,}g.
\cite{Gc2,GH1,GH2,Sj1,To5,To8,To13} in the more restricted case $A=t\cdot I$, and
we emphasize that all results are obtained by using the framework of
these earlier contributions. Furthermore, some results here are in some cases
contained in certain results in Chapters 1 and 2 in \cite{Bay}.

\par

In Section \ref{sec3} we also give examples on how these operators might be used
in quantization, by taking the average of $\op _A(a)$ with $A=\frac 12\cdot I+r
\cdot U$, over all $r\in [0,1]$ and unitary matrices $U$ with real entries.

\par

\subsection*{Acknowledgement} I am very grateful to Ville Turunen for careful
reading and important advices, leading to several improvements of the original paper.

\par

\section{Preliminaries}\label{sec1}

\par

In this section we introduce some notations and discuss basic
results. We start by recalling some facts concerning
Gelfand-Shilov spaces. Thereafter we recall some properties about
pseudo-differential operators. Especially we discuss the Weyl
product and twisted convolution. Finally we recall some
facts about modulation spaces. The proofs are in general omitted,
since the results can be found in the literature.


\par

We start by considering Gelfand-Shilov spaces. 
Let $0<h,s\in \mathbf R$ be fixed. Then $\mathcal S_{s,h}(\rr d)$ consists of
all $f\in C^\infty (\rr d)$ such that
\begin{equation*}
\nm f{\mathcal S_{s,h}}\equiv \sup \frac {|x^\beta \partial ^\alpha
f(x)|}{h^{|\alpha | + |\beta |}\alpha !^s\, \beta !^s}
\end{equation*}
is finite. Here the supremum should be taken over all $\alpha ,\beta \in
\mathbf N^d$ and $x\in \rr d$.

\par

Obviously $\mathcal S_{s,h}\hookrightarrow
\mathscr S$ is a Banach space which increases with $h$ and $s$. Here and
in what follows we use the notation $A\hookrightarrow B$ when the topological
spaces $A$ and $B$ satisfy $A\subseteq B$ with continuous embeddings.
Furthermore, if $s>1/2$, or $s =1/2$ and $h$ is sufficiently large, then $\mathcal
S_{s,h}$ contains all finite linear combinations of Hermite functions.
Since such linear combinations are dense in $\mathscr S$, it follows
that the dual $(\mathcal S_{s,h})'(\rr d)$ of $\mathcal S_{s,h}(\rr d)$ is
a Banach space which contains $\mathscr S'(\rr d)$.

\par

The \emph{Gelfand-Shilov spaces} $\mathcal S_{s}(\rr d)$ and
$\Sigma _{s}(\rr d)$ are the inductive and projective limits respectively
of $\mathcal S_{s,h}(\rr d)$. This implies that
\begin{equation}\label{GSspacecond1}
\mathcal S_{s}(\rr d) = \bigcup _{h>0}\mathcal S_{s,h}(\rr d)
\quad \text{and}\quad \Sigma _{s}(\rr d) =\bigcap _{h>0}\mathcal S_{s,h}(\rr d),
\end{equation}
and that the topology for $\mathcal S_{s}(\rr d)$ is the strongest possible one such
that the inclusion map from $\mathcal S_{s,h}(\rr d)$ to $\mathcal S_{s}(\rr d)$
is continuous, for every choice of $h>0$. The space $\Sigma _s(\rr d)$ is a
Fr{\'e}chet space with semi norms
$\nm \cdo{\mathcal S_{s,h}}$, $h>0$. Moreover, $\Sigma _s(\rr d)\neq \{ 0\}$,
if and only if $s>1/2$, and $\maclS _s(\rr d)\neq \{ 0\}$
if and only if $s\ge 1/2$
(cf. \cite{GS,Ko,Pil}).

\medspace

The \emph{Gelfand-Shilov distribution spaces} $\mathcal S_{s}'(\rr d)$
and $\Sigma _s'(\rr d)$ are the projective and inductive limit
respectively of $\mathcal S_s'(\rr d)$.  This means that
\begin{equation}\tag*{(\ref{GSspacecond1})$'$}
\mathcal S_s'(\rr d) = \bigcap _{h>0}\mathcal S_{s,h}'(\rr d)\quad
\text{and}\quad \Sigma _s'(\rr d) =\bigcup _{h>0} \mathcal S_{s,h}'(\rr d).
\end{equation}
We remark that in \cite{Ko, Pil} it is proved that $\mathcal S_s'(\rr d)$
is the dual of $\mathcal S_s(\rr d)$, and $\Sigma _s'(\rr d)$
is the dual of $\Sigma _s(\rr d)$ (also in topological sense).

\par

For each $\ep >0$ and $s>1/2$ we have
\begin{equation}\label{GSembeddings}
\begin{alignedat}{3}
\maclS _{1/2}(\rr d) &\hookrightarrow &\Sigma _s (\rr d) &\hookrightarrow &
\maclS _s(\rr d) &\hookrightarrow \Sigma _{s+\ep}(\rr d)
\\[1ex]
\quad \text{and}\quad
\Sigma _{s+\ep}' (\rr d) &\hookrightarrow & \maclS _s'(\rr d)
&\hookrightarrow & \Sigma _s'(\rr d) &\hookrightarrow \maclS _{1/2}'(\rr d).
\end{alignedat}
\end{equation}

\par

The Gelfand-Shilov spaces are invariant under several basic transformations.
For example they are invariant under translations, dilations
and under (partial) Fourier transformations. We also note that the map
$(f_1,f_2)\mapsto f_1\otimes f_2$ is continuous from $\maclS _s(\rr {d_1})
\times \maclS _s(\rr {d_2})$ to $\maclS _s(\rr {d_1+d_2})$, and similarly when
each $\maclS _s$ are replaced by $\Sigma _s$, $\maclS _s'$ or by
$\Sigma _s'$.

\par

We let $\mathscr F$ be the Fourier transform which takes the form
$$
(\mathscr Ff)(\xi )= \widehat f(\xi ) \equiv (2\pi )^{-d/2}\int _{\rr
{d}} f(x)e^{-i\scal  x\xi }\, dx
$$
when $f\in L^1(\rr d)$. Here $\scal \cdo \cdo$ denotes the usual scalar product
on $\rr d$. The map $\mathscr F$ extends 
uniquely to homeomorphisms on $\mathscr S'(\rr d)$, $\mathcal S_s'(\rr d)$
and $\Sigma _s'(\rr d)$, and restricts to 
homeomorphisms on $\mathscr S(\rr d)$, $\mathcal S_s(\rr d)$ and $\Sigma _s(\rr d)$, 
and to a unitary operator on $L^2(\rr d)$.

\par

\par

\subsection{An extended family of pseudo-differential calculi}

\par

Next we discuss some issues in pseudo-differential calculus.
Let $\GL (d,\Omega)$ be the set of all $d\times d$-matrices with
entries in the set $\Omega$, and let $s\ge 1/2$, $a\in \maclS _s 
(\rr {2d})$ and $A\in \GL (d,\mathbf R)$ be fixed.
Then the pseudo-differential operator $\op _A(a)$ is the linear and
continuous operator on $\maclS _s (\rr d)$, given by
\begin{equation}\label{e0.5}
(\op _A(a)f)(x)
=
(2\pi  ) ^{-d}\iint a(x-A(x-y),\xi )f(y)e^{i\scal {x-y}\xi }\, dyd\xi ,
\end{equation}
when $f\in \maclS _s(\rr d)$. For
general $a\in \maclS _s'(\rr {2d})$, the
pseudo-differential operator $\op _A(a)$ is defined as the linear and
continuous operator from $\maclS _s(\rr d)$ to $\maclS _s'(\rr d)$ with
distribution kernel given by
\begin{equation}\label{atkernel}
K_{a,A}(x,y)=(2\pi )^{-d/2}(\mascF _2^{-1}a)(x-A(x-y),x-y).
\end{equation}
Here $\mascF _2F$ is the partial Fourier transform of $F(x,y)\in
\maclS _s'(\rr {2d})$ with respect to the $y$ variable. This
definition makes sense, since the mappings
\begin{equation}\label{homeoF2tmap}
\mascF _2\quad \text{and}\quad F(x,y)\mapsto F(x-A(x-y),x-y)
\end{equation}
are homeomorphisms on $\maclS _s'(\rr {2d})$.
In particular, the map $a\mapsto K_{a,A}$ is a homeomorphism on
$\maclS _s'(\rr {2d})$.

\par

An important special case appears when $A=t\cdot I$, with
$t\in \mathbf R$. Here and in what follows, $I\in \GL (d,\mathbf R)$ denotes
the $d\times d$ identity matrix. In this case we set
$$
\op _t(a) = \op _{t\cdot I}(a).
$$
The normal or Kohn-Nirenberg representation, $a(x,D)$, is obtained
when $t=0$, and the Weyl quantization, $\op ^w(a)$, is obtained
when $t=\frac 12$. That is,
$$
a(x,D) = \op _0(a)
\quad \text{and}\quad \op ^w(a) = \op _{1/2}(a).
$$

\par

For any $K\in \maclS '_s(\rr {d_1+d_2})$, we let $T_K$ be the
linear and continuous mapping from $\maclS _s(\rr {d_1})$
to $\maclS _s'(\rr {d_2})$, defined by the formula
\begin{equation}\label{pre(A.1)}
(T_Kf,g)_{L^2(\rr {d_2})} = (K,g\otimes \overline f )_{L^2(\rr {d_1+d_2})}.
\end{equation}
It is well-known that if $A\in \GL (d,\mathbf R)$, then it follows from Schwartz kernel
theorem that $K\mapsto T_K$ and $a\mapsto \op _A(a)$ are bijective
mappings from $\mascS '(\rr {2d})$
to the set of linear and continuous mappings from $\mascS (\rr d)$ to
$\mascS '(\rr d)$ (cf. e.{\,}g. \cite{Ho1}).

\par
 
Furthermore, by e.{\,}g. \cite[Theorem 2.2]{LozPerTask} it follows
that the same holds true if each $\mascS$ and $\mascS '$ are
replaced by $\maclS _s$ and $\maclS _s'$, respectively, or by
$\Sigma _s$ and $\Sigma _s'$, respectively.

\par

In particular, for every $a_1\in \maclS _s '(\rr {2d})$ and $A_1,A_2\in
\GL (d,\mathbf R)$, there is a unique $a_2\in \maclS _s '(\rr {2d})$ such that
$\op _{A_1}(a_1) = \op _{A_2} (a_2)$. The following result explains the
relations between $a_1$ and $a_2$.

\par

\begin{prop}\label{Prop:CalculiTransfer}
Let $a_1,a_2\in \maclS _{1/2}'(\rr {2d})$ and $A_1,A_2\in \GL (d,\mathbf R)$.
Then
\begin{equation}\label{calculitransform}
\op _{A_1}(a_1) = \op _{A_2}(a_2) \quad \Leftrightarrow \quad
e^{i\scal {A_2D_\xi}{D_x }}a_2(x,\xi )=e^{i\scal {A_1D_\xi}{D_x }}a_1(x,\xi ).
\end{equation}
\end{prop}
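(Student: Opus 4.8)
The plan is to put both operators into the single canonical Kohn-Nirenberg form $\op _0$ and then to invoke injectivity of $a\mapsto \op _0(a)$; everything will rest on the kernel formula \eqref{atkernel} together with the behaviour of $e^{i\scal {AD_\xi}{D_x}}$ under the partial Fourier transform $\mascF _2$. So first I would establish a transfer formula: writing $\Phi _A$ for the pullback $F(x,w)\mapsto F(x-Aw,w)$, which by \eqref{homeoF2tmap} (together with the invariance of Gelfand-Shilov spaces and their duals under invertible linear substitutions) is a homeomorphism on $\maclS _{1/2}'(\rr {2d})$, the claim is
\begin{equation*}
\mascF _2^{-1}\big (e^{i\scal {AD_\xi}{D_x}}a\big )=\Phi _A\big (\mascF _2^{-1}a\big ),\qquad a\in \maclS _{1/2}'(\rr {2d}),\ A\in \GL (d,\mathbf R).
\end{equation*}
For a Gelfand-Shilov test function $a$ this is a direct computation: passing to the full Fourier transform of $a$ in $(x,\xi)$, the operator $e^{i\scal {AD_\xi}{D_x}}$ becomes multiplication by the entire symbol $e^{i\scal {A\eta}{\zeta}}$, with $\zeta ,\eta$ dual to $x,\xi$; carrying out the $\xi$-integration that defines $\mascF _2^{-1}$ produces a Dirac mass in the variable dual to $\xi$, which turns the factor $e^{i\scal {A\eta}{\zeta}}$ into the translation $x\mapsto x-Aw$ in the first slot of $\mascF _2^{-1}a$. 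The identity then extends to all of $\maclS _{1/2}'(\rr {2d})$ by continuity and duality.

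Next I would feed this into \eqref{atkernel}: since $K_{a,A}(x,y)=(2\pi )^{-d/2}(\mascF _2^{-1}a)(x-A(x-y),x-y)=(2\pi )^{-d/2}(\Phi _A\mascF _2^{-1}a)(x,x-y)$, the transfer formula gives
\begin{equation*}
K_{a,A}(x,y)=(2\pi )^{-d/2}\big (\mascF _2^{-1}(e^{i\scal {AD_\xi}{D_x}}a)\big )(x,x-y)=K_{e^{i\scal {AD_\xi}{D_x}}a,\,0}(x,y),
\end{equation*}
so that $\op _A(a)=\op _0\big (e^{i\scal {AD_\xi}{D_x}}a\big )$ for every $a\in \maclS _{1/2}'(\rr {2d})$ and every $A\in \GL (d,\mathbf R)$. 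Applying this to $(A_1,a_1)$ and to $(A_2,a_2)$, and using that $a\mapsto \op _0(a)$ is injective from $\maclS _{1/2}'(\rr {2d})$ into the set of linear and continuous maps $\maclS _{1/2}(\rr d)\to \maclS _{1/2}'(\rr d)$ (the case $A=0$ of the kernel theorem recalled above), one obtains
\begin{equation*}
\op _{A_1}(a_1)=\op _{A_2}(a_2)\ \Leftrightarrow\ \op _0\big (e^{i\scal {A_1D_\xi}{D_x}}a_1\big )=\op _0\big (e^{i\scal {A_2D_\xi}{D_x}}a_2\big )\ \Leftrightarrow\ e^{i\scal {A_1D_\xi}{D_x}}a_1=e^{i\scal {A_2D_\xi}{D_x}}a_2,
\end{equation*}
which is exactly \eqref{calculitransform}. (As $e^{i\scal {A_1D_\xi}{D_x}}$ and $e^{i\scal {A_2D_\xi}{D_x}}$ commute and are inverted by $e^{-i\scal {A_1D_\xi}{D_x}}$ and $e^{-i\scal {A_2D_\xi}{D_x}}$, the right-hand condition may equivalently be written $a_2=e^{i\scal {(A_1-A_2)D_\xi}{D_x}}a_1$.)

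The one genuinely non-formal point — and the step I expect to be the main obstacle — is the transfer formula at the level of ultra-distributions: one must know that the Fourier multiplier $e^{i\scal {AD_\xi}{D_x}}$, whose symbol is entire of Gaussian/oscillatory type, is a well-defined homeomorphism of $\maclS _{1/2}(\rr {2d})$ and hence, by transposition, of $\maclS _{1/2}'(\rr {2d})$, and that it intertwines with the pullback $\Phi _A$ as above. Once this is available, the remaining steps are routine bookkeeping with \eqref{atkernel} and the Schwartz kernel theorem, and the claimed equivalence is read off directly.
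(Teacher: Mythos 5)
Your proof is correct and is in essence the paper's own argument: both reduce matters to the Kohn--Nirenberg calculus by showing $\op _A(a)=\op _0\big (e^{i\scal {AD_\xi}{D_x}}a\big )$ through the same Fourier-inversion computation, and then conclude by injectivity of $a\mapsto \op _0(a)$ (the Gelfand--Shilov kernel theorem). The only difference is packaging: you derive the identity via the kernel formula \eqref{atkernel} and the intertwining relation between $e^{i\scal {AD_\xi}{D_x}}$ and $\mascF _2^{-1}$, extending to $\maclS _{1/2}'(\rr {2d})$ by continuity and duality, whereas the paper reduces to $A_2=0$, computes directly on the symbol side for Schwartz symbols and leaves the distributional extension to the reader, relying (as you note) on the continuity of $e^{i\scal {AD_\xi}{D_x}}$ on $\maclS _s'$ from \cite{Tr,CaTo}.
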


\par

Note here that the latter equality in \eqref{calculitransform} makes sense
since it is equivalent to
$$
e^{i\scal {A_2x}{\xi}}\widehat a_2(\xi ,x)
=e^{i\scal {A_1x}{\xi}}\widehat a_1(\xi ,x),
$$
and that the map $a\mapsto e^{i\scal {Ax} \xi }a$ is continuous on
$\maclS _s '$ (cf. e.{\,}g. \cite{Tr,CaTo}).

\par

Passages between different kinds of pseudo-differential calculi have been
considered before (Cf. e.{\,}g. \cite{Ho1,Tr}.) On the other hand, except for
\cite{CaTo}, it seems that the representation $a\mapsto \op _A(a)$ for general
matrix $A\in
\GL (d,\mathbf R)$, has not been considered in the literature before.

\par

A proof of Proposition \ref{Prop:CalculiTransfer} for matrices of the form $A=t\cdot I$,
$t\in \mathbf R$,
can be found in e.{\,}g. \cite{Sh}, and the proof of the result for general $A$
follows by similar arguments. In order to be self-contained we here present
the arguments. Here and in what follows, $A^*$ denotes the transpose of the matrix
$A$.

\par

\begin{proof}[Proof of Proposition \ref{Prop:CalculiTransfer}]
It is no restriction to assume that $A_2=0$.
Let $a=a_1$, $b=a_2$ and $A=A_1$.
We also prove the result only in the case
$a,b\in \mascS (\rr {2d})$. The general case follows by similar arguments and
is left for the reader.

\par

The equality $\op _A(a)=\op (b)$ is the same as
\begin{alignat*}{2}
\mascF (b(x,\cdo ))(y-x) &= \mascF (a(x-A(x-y),\cdo ))(y-x) &
\quad &\Leftrightarrow \quad
\\[1ex]
\int b(x,\eta )e^{i\scal y\eta }\, d\eta &= \int a(x-Ay,\eta )e^{i\scal y\eta}\, d\eta &
\quad &\Leftrightarrow \quad
\\[1ex]
b(x,\xi ) &= (2\pi )^{-d}\iint a(x-Ay,\eta )e^{i\scal y{\eta -\xi}}\, dy d\eta & &
\end{alignat*}
By Fourier's inversion formula we get
\begin{align*}
b(x,\xi )&= (2\pi )^{-d}\iint a(x-Ay,\eta )e^{i\scal y{\eta -\xi}}\, dy d\eta
\\[1ex]
&= (2\pi )^{-2d}\iiiint \widehat a(\eta _1,y_1 )
e^{i( \scal {x-Ay}{\eta _1}+\scal{y_1}{\eta} +\scal y{\eta -\xi})}\, dy_1 d\eta _1dy d\eta
\\[1ex]
&= (2\pi )^{-d}\iint \widehat a(\eta _1,y_1 )e^{i( \scal {x}{\eta _1}
+\scal{y_1}{\xi +A^*\eta _1})}\, dy_1 d\eta _1
\\[1ex]
&=
e^{i\scal{AD_\xi}{D_x}}a(x,\xi ),
\end{align*}
which gives the result.
\end{proof}

\par

Let $a\in \maclS _s '(\rr {2d})$ be
fixed. Then $a$ is called a rank-one element with respect to
$A\in \GL (d,\mathbf R)$, if $\op _A(a)$ is an operator of rank-one,
i.{\,}e.
\begin{equation}\label{trankone}
\op _A(a)f=(f,f_2)f_1, \qquad f\in \maclS _s(\rr d),
\end{equation}
for some $f_1,f_2\in \maclS _s '(\rr d)$. By
straight-forward computations it follows that \eqref{trankone}
is fulfilled if and only if $a=(2\pi
)^{\frac d2}W_{f_1,f_2}^A$, where $W_{f_1,f_2}^A$
is the $A$-Wigner distribution, defined by the formula
\begin{equation}\label{wignertdef}
W_{f_1,f_2}^A(x,\xi ) \equiv \mascF \big (f_1(x+A\cdo
)\overline{f_2(x+(A-I)\cdo )} \big ) (\xi ),
\end{equation}
which takes the form
$$
W_{f_1,f_2}^A(x,\xi ) =(2\pi )^{-\frac d2} \int
f_1(x+Ay)\overline{f_2(x+(A-I)y) }e^{-i\scal y\xi}\, dy,
$$
when $f_1,f_2\in \maclS _s (\rr d)$. By combining these facts
with \eqref{calculitransform}, it follows that
\begin{equation}\label{wignertransf}
e^{i\scal {A_2D_\xi  }{D_x}}W_{f_1,f_2}^{A_2}
=
e^{i\scal {A_1D_\xi }{D_x}} W_{f_1,f_2}^{A_1},
\end{equation}
for every $f_1,f_2\in \maclS _s '(\rr d)$ and $A_1,A_2\in \GL (d,\mathbf R)$. Since
the Weyl case is particularly important, we set
$W_{f_1,f_2}^{A}=W_{f_1,f_2}$ when $A=\frac 12I$, i.{\,}e.
$W_{f_1,f_2}$ is the usual (cross-)Wigner distribution of $f_1$ and
$f_2$.

\par

For future references we note the link
\begin{multline}\label{tWigpseudolink}
(\op _A(a)f,g)_{L^2(\rr d)} =(2\pi )^{-d/2}(a,W_{g,f}^A)_{L^2(\rr {2d})},
\\[1ex]
a\in \maclS _s'(\rr {2d}) \quad\text{and}\quad f,g\in \maclS _s(\rr d)
\end{multline}
between pseudo-differential operators and Wigner distributions,
which follows by straight-forward computations (see also e.{\,}g.
\cite{dG,dGLu}).

\medspace

For any $A\in \GL (d,\mathbf R)$, the
$A$-product, $a\wpr _Ab$ between $a\in \maclS _s' (\rr {2d})$
and $b\in \maclS _s'(\rr {2d})$ is defined by the formula
\begin{equation}\label{wprtdef}
\op _A(a\wpr _A b) = \op _A(a)\circ \op _A(b),
\end{equation}
provided the right-hand side makes sense as a continuous operator from
$\maclS _s (\rr d)$ to $\maclS _s '(\rr d)$.

\par

\subsection{Modulation spaces}

\par

Next we discuss basic properties for modulation spaces, and start by
recalling the conditions for the involved weight functions. A function $\omega$
on $\rr d$ is called a \emph{weight} (on $\rr d$), if $\omega >0$ and
$\omega ,\omega ^{-1}\in L^\infty _{loc}(\rr d)$. Let $\omega$ and $v$
be weights on $\rr d$. Then $\omega$ is called \emph{moderate} or
\emph{$v$-moderate} if
\begin{equation}\label{moderate}
\omega (x+y) \lesssim \omega (x)v(y),\quad x,y\in \rr d.
\end{equation}
Here and in what follows we write $A\lesssim B$ when $A,B\ge 0$
and $A\le cB$ for a suitable constant $c>0$. We also let
$A\asymp B$ when $A\lesssim B$ and $B\lesssim A$.
The weight $v$ is called \emph{submultiplicative},
if $v$ is even and \eqref{moderate} holds when
$\omega =v$. We note that if \eqref{moderate} holds, then
$$
v(-x)^{-1}\lesssim \omega (x) \lesssim v(x).
$$
Furthermore, for such $\omega$ it follows that \eqref{moderate} is true when
$$
v(x) =Ce^{c|x|},
$$
for some positive constants $c$ and $C$ (cf. e.{\,}g. \cite{Gc2.5}).
In particular, if $\omega$ is moderate on $\rr d$, then
$$
e^{-c|x|}\lesssim \omega (x)\lesssim e^{c|x|},
$$
for some constant $c>0$.

\par

The set of all moderate functions on $\rr d$
is denoted by $\mascP _E(\rr d)$. Furthermore, if $v$ in \eqref{moderate}
can be chosen as a polynomial, then $\omega$ is
called a weight of polynomial type, or polynomially moderated. We let
$\mascP (\rr d)$ be the set
of all polynomially moderated weights on $\rr d$. If $\omega (x,\xi
)\in \mascP _E (\rr {2d})$ is constant with respect to the
$x$-variable ($\xi$-variable), then we sometimes write $\omega (\xi )$
($\omega (x)$) instead of $\omega (x,\xi )$. In this case we consider
$\omega$ as an element in $\mascP _E(\rr {2d})$ or in $\mascP _E(\rr
d)$ depending on the situation.

\medspace

Let $\phi \in \maclS _s '(\rr d)$ be fixed. Then the \emph{short-time
Fourier transform} $V_\phi f$ of $f\in \maclS _s '
(\rr d)$ with respect to the \emph{window function} $\phi$ is
the Gelfand-Shilov distribution on $\rr {2d}$, defined by
$$
V_\phi f(x,\xi ) \equiv  (\mascF _2 (U(f\otimes \phi )))(x,\xi ) =
\mascF (f \, \overline {\phi (\cdo -x)})(\xi
),
$$
where $(UF)(x,y)=F(y,y-x)$. If $f ,\phi \in \maclS _s (\rr d)$, then it follows that
$$
V_\phi f(x,\xi ) = (2\pi )^{-d/2}\int f(y)\overline {\phi
(y-x)}e^{-i\scal y\xi}\, dy .
$$
We recall that the short-time Fourier transform is closely related to the
Wigner distribution, because
\begin{equation}\label{WigSTFTrelation}
W_{f,\phi} (x,\xi )=2^de^{2i\scal x\xi}V_{\check \phi}f(2x,2\xi ),
\end{equation}
which follows by elementary manipulations. Here $\check \phi (x)=\phi (-x)$.

\par

Let $\omega \in \mascP _E (\rr {2d})$, $p,q\in (0,\infty ]$
and $\phi \in \maclS _{1/2} (\rr d)\setminus 0$ be fixed. Then the mixed
Lebesgue space $L^{p,q}_{(\omega )}(\rr {2d})$ consists of
all measurable functions $F$ on $\rr {2d}$ such that
$\nm F{L^{p,q}_{(\omega )}}<\infty$. Here
\begin{equation}\label{Lpq1norm}
\nm F{L^{p,q}_{(\omega )}} \equiv \nm {F_{p,\omega}}{L^q},
\quad \text{where} \quad
F_{p,\omega}(\xi ) \equiv \nm {F(\cdo ,\xi )\omega (\cdo ,\xi )}{L^p}.
\end{equation}
We note that these quasi-norms might attain $+\infty$.

\par

The \emph{modulation space} $M^{p,q}_{(\omega )}(\rr d)$
is the quasi-Banach space which consist of
all $f\in \maclS _{1/2} '(\rr d)$ such that $\nm f{M^{p,q}_{(\omega
)}}<\infty$, where
\begin{equation}\label{modnorm}
\nm f{M^{p,q}_{(\omega )}}\equiv \nm {V_\phi f}{L^{p,q}_{(\omega
)}}.
\end{equation}
We remark that the definition of $M^{p,q}_{(\omega )}(\rr d)$
is independent of the choice of $\phi \in
\maclS _{1/2} (\rr d)\setminus 0$
and different $\phi$ gives rise to equivalent quasi-norms. (See Proposition
\ref{p1.4} below).

\par

For convenience we set $M^p _{(\omega )}= M^{p,p}_{(\omega
)}$. Furthermore we set $M^{p,q}=M^{p,q}_{(\omega )}$ when
$\omega \equiv 1$.

\par

The proof of the following proposition is omitted, since the results
can be found in \cite {Cor,Fe3,Fe4,FG1,FG2,FG3,GaSa,Gc2,To5,
To6, To7, To8, To11,To14}. Here we recall that $p,p'\in[1,\infty]$ satisfy
$\frac 1p+\frac 1{p'}=1$.

\par

\begin{prop}\label{p1.4}
Let $p,q,p_j,q_j\in (0,\infty ]$ for $j=1,2$, $r\le \min (p,q,1)$,
and $\omega ,\omega _1,\omega _2,v\in \mascP _E (\rr {2d})$
be such that $v$ is submultiplicative, $\omega$ is $v$-moderate and
$\omega _2\lesssim \omega _1$. Then the following is true:
\begin{enumerate}
\item[(1)] $f\in M^{p,q}_{(\omega )}(\rr d)$ if and only if
\eqref{modnorm} holds for any $\phi \in M^r_{(v)}(\rr d)\setminus
0$. Moreover, $M^{p,q}_{(\omega )}$ is a quasi-Banach space under
the quasi-norm
in \eqref{modnorm} and different choices of $\phi$ give rise to
equivalent quasi-norms. Furthermore, if $p,q\ge 1$, then $M^{p,q}_{(\omega )}$
is a Banach space;

\vrum

\item[(2)] if  $p_1\le p_2$ and $q_1\le q_2$  then
$$
\Sigma _1 (\rr d)\hookrightarrow M^{p_1,q_1}_{(\omega _1)}(\rr
d)\hookrightarrow M^{p_2,q_2}_{(\omega _2)}(\rr d)\hookrightarrow
\Sigma _1 '(\rr d)\text ;
$$

\vrum

\item[(3)] if in addition $p,q\ge 1$, then the $L^2$ product
$( \cdo ,\cdo )_{L^2}$ on $\maclS _{1/2}(\rr d)$
extends uniquely  to a continuous map from $M^{p,q}_{(\omega )}(\rr
n)\times M^{p'\! ,q'}_{(1/\omega )}(\rr d)$ to $\mathbf C$. On the
other hand, if $\nmm a = \sup \abp {(a,b)}$, where the supremum is
taken over all $b\in \maclS _{1/2} (\rr d)$ such that
$\nm b{M^{p',q'}_{(1/\omega )}}\le 1$, then $\nmm {\cdot}$ and $\nm
\cdot {M^{p,q}_{(\omega )}}$ are equivalent norms;

\vrum

\item[(4)] if $1\le p,q<\infty$, then $\maclS _{1/2} (\rr d)$ is dense in
$M^{p,q}_{(\omega )}(\rr d)$ and the dual space of $M^{p,q}_{(\omega
)}(\rr d)$ can be identified
with $M^{p'\! ,q'}_{(1/\omega )}(\rr d)$, through the $L^2$-form
$(\cdo  ,\cdo )_{L^2}$. Moreover, $\maclS _{1/2} (\rr d)$ is weakly dense
in $M^{p' ,q'}_{(\omega )}(\rr d)$ with respect to the $L^2$-form.
\end{enumerate}
\end{prop}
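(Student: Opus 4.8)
The plan is to prove Proposition \ref{p1.4} by reducing everything to known properties of the short-time Fourier transform together with convolution estimates for the mixed quasi-norm spaces $L^{p,q}_{(\omega )}$. The foundational tool is the reproducing formula: for $\phi ,\psi \in \maclS _{1/2}(\rr d)$ with $(\phi ,\psi )_{L^2}\neq 0$ one has, for $f\in \maclS _{1/2}'(\rr d)$,
\begin{equation*}
|V_\psi f(x,\xi )|\lesssim \big (|V_\phi f|*|V_\psi \phi |\big )(x,\xi ),
\end{equation*}
which is the standard pointwise estimate obtained by inserting the resolution of the identity. Combining this with the moderateness $\omega (x,\xi )\lesssim \omega (y,\eta )v(x-y,\xi -\eta )$ and with Young's inequality for the mixed quasi-norm spaces (valid since $r\le \min (p,q,1)$ guarantees the needed convolution inequality $\nm {F*G}{L^{p,q}_{(\omega )}}\lesssim \nm F{L^{p,q}_{(\omega )}}\nm G{L^r_{(v)}}$), one deduces that $\nm {V_\psi f}{L^{p,q}_{(\omega )}}\lesssim \nm {V_\phi f}{L^{p,q}_{(\omega )}}\,\nm {V_\psi \phi}{L^r_{(v)}}$, and since $V_\psi \phi \in \maclS _{1/2}(\rr {2d})$ whenever $\phi ,\psi \in \maclS _{1/2}$, the window $\phi$ can in fact be taken in $M^r_{(v)}(\rr d)\setminus 0$ after a further approximation argument. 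This gives the window-independence in (1); completeness is then routine since $V_\phi$ is, up to the weight, an isometry onto a closed subspace of $L^{p,q}_{(\omega )}$, and the quasi-Banach (Banach when $p,q\ge 1$) structure is inherited.

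For part (2), the left embedding $\Sigma _1\hookrightarrow M^{p_1,q_1}_{(\omega _1)}$ follows by choosing $\phi \in \Sigma _1(\rr d)\setminus 0$ and using the Gelfand--Shilov decay of $V_\phi f$ (rapid decay of exponential type) against the at-most-exponential growth of $\omega _1$, which makes the $L^{p_1,q_1}_{(\omega _1)}$-quasi-norm finite and continuous in the $\Sigma _1$-topology. The middle embedding $M^{p_1,q_1}_{(\omega _1)}\hookrightarrow M^{p_2,q_2}_{(\omega _2)}$ uses $\omega _2\lesssim \omega _1$ together with the pointwise estimate above: writing $V_\psi f$ as a convolution of $V_\phi f$ with a rapidly decreasing function one trades the loss of integrability $p_1\le p_2$, $q_1\le q_2$ against the extra decay, exactly as in the classical case. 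The right embedding $M^{p_2,q_2}_{(\omega _2)}\hookrightarrow \Sigma _1'$ is the dual statement: pairing $f$ against a test function $g\in \Sigma _1$ via Moyal's identity $(f,g)_{L^2}=(V_\phi f,V_\phi g)_{L^2}$ and estimating by Hölder in the mixed quasi-norm.

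Parts (3) and (4) are then obtained from Moyal's identity together with Hölder's inequality for $L^{p,q}_{(\omega )}$ versus $L^{p',q'}_{(1/\omega )}$: the $L^2$-form extends to $|(f,g)_{L^2}|=|(V_\phi f,V_\phi g)_{L^2}|\lesssim \nm {V_\phi f}{L^{p,q}_{(\omega )}}\nm {V_\phi g}{L^{p',q'}_{(1/\omega )}}$, which gives one inequality in (3); the reverse inequality, i.e. that $\sup _{\nm b{}\le 1}|(a,b)|$ recovers the modulation-space norm, follows by a duality/Hahn--Banach argument on the closed subspace $V_\phi (M^{p,q}_{(\omega )})\subseteq L^{p,q}_{(\omega )}$ together with the fact that for $1\le p,q<\infty$ the dual of $L^{p,q}_{(\omega )}$ is $L^{p',q'}_{(1/\omega )}$ and that the synthesis operator is a bounded left inverse of $V_\phi$. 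Density of $\maclS _{1/2}$ in $M^{p,q}_{(\omega )}$ for $1\le p,q<\infty$ is shown by the usual two-step truncation-and-mollification in the STFT picture (cut off $V_\phi f$ to a large box, mollify, then synthesize), and the identification of the dual space then drops out. I expect the main obstacle to be the careful handling of the quasi-Banach range $p$ or $q<1$: here the triangle inequality fails, Young's inequality must be replaced by its $r$-version, and the density and duality arguments in (3)--(4) genuinely require $p,q\ge 1$, so one must be precise about which parts of the statement hold in which range; but all of this is by now standard and is exactly the content of the references \cite{Cor,Fe3,Fe4,FG1,FG2,FG3,GaSa,Gc2,To5,To6,To7,To8,To11,To14}, which is why the proof is omitted here.
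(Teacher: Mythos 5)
The paper itself offers no proof of Proposition \ref{p1.4}; it is quoted with a pointer to the literature (Feichtinger, Gr\"ochenig, Galperin--Samarah, Cordero, Toft), so the comparison is with the standard arguments in those references. For the Banach range $p,q\ge 1$ your outline is exactly that standard route: change-of-window via the pointwise estimate $|V_\psi f|\lesssim |V_\phi f|\ast |V_\psi\gamma|$, weighted Young with $r=1$, Moyal's identity plus mixed-norm duality for (3)--(4), and truncation/mollification of $V_\phi f$ for density. That part is sound.

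The genuine gap is in the quasi-Banach range, which the proposition explicitly covers ($p,q\in(0,\infty]$, $r\le\min(p,q,1)$). The convolution inequality you invoke, $\Vert F\ast G\Vert_{L^{p,q}_{(\omega)}}\lesssim \Vert F\Vert_{L^{p,q}_{(\omega)}}\Vert G\Vert_{L^{r}_{(v)}}$ for general $F\in L^{p,q}_{(\omega)}$ and $G\in L^{r}_{(v)}$, is false when $\min(p,q)<1$: already unweighted with $p=q=r<1$ on $\mathbf R^{n}$, take $F=G$ a bump of height $1$ supported in a ball of radius $\ep$; then $\Vert F\ast G\Vert_{L^{p}}\asymp \ep^{\, n}\ep^{\, n/p}$ while $\Vert F\Vert_{L^{p}}\Vert G\Vert_{L^{r}}\asymp\ep^{\, n/p}\ep^{\, n/r}$, and as $\ep\to 0$ the inequality forces $r\ge 1$. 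So there simply is no ``$r$-version of Young'' on $L^{p,q}_{(\omega)}$ for arbitrary functions, and with it your window-independence argument, the completeness/closed-range claim, and the embedding $M^{p_1,q_1}_{(\omega_1)}\hookrightarrow M^{p_2,q_2}_{(\omega_2)}$ (note $L^{p_1}\not\subseteq L^{p_2}$ pointwise, so the whole burden is on this step) all collapse for small exponents. The known repair, and what the cited sources actually do, is to use the extra structure of STFTs: with a sufficiently nice window, $V_\phi f$ lies in a Wiener amalgam space $W(L^\infty ,L^{p,q}_{(\omega)})$, where the convolution relation with $L^r_{(v)}$-type factors does hold because it reduces to the discrete inequality $\ell^{p,q}\ast\ell^{r}\subseteq\ell^{p,q}$, $r\le\min(1,p,q)$; equivalently one passes to Gabor frame expansions and proves window invariance, completeness and the embeddings at the sequence-space level (this is precisely the content of Galperin--Samarah and of \cite{To14}, Propositions 3.1 and 3.4, which the present paper leans on). Your parts (3)--(4) are restricted to $p,q\ge 1$ and are essentially fine, apart from needing the usual extra care in (3) when $p$ or $q$ equals $\infty$, where $L^{p',q'}_{(1/\omega)}$ is no longer the full dual and the norm identity must be obtained by direct computation rather than by quoting duality.
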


\par

Proposition \ref{p1.4}{\,}(1) allows us  be rather vague concerning
the choice of $\phi \in  M^r_{(v)}\setminus 0$ in
\eqref{modnorm}. For example, if $C>0$ is a
constant and $\mathscr A$ is a subset of $\maclS _{1/2} '$, then $\nm
a{M^{p,q}_{(\omega )}}\le C$ for
every $a\in \mathscr A$, means that the inequality holds for some choice
of $\phi \in  M^r_{(v)}\setminus 0$ and every $a\in
\mathscr A$. Evidently, a similar inequality is true for any other choice
of $\phi \in  M^r_{(v)}\setminus 0$, with  a suitable constant, larger
than $C$ if necessary.

\par

\begin{rem}\label{remGSmodident}
By Theorem 3.9 in \cite{To11} and Proposition \ref{p1.4} (2) it follows that
$$
\bigcap _{\omega \in \mascP _E}M^{p,q}_{(\omega )}(\rr d) = \Sigma _1(\rr d),
\quad
\bigcup _{\omega \in \mascP _E}M^{p,q}_{(\omega )}(\rr d) = \Sigma _1'(\rr d)
$$
More generally, let $s\ge 1$, $v_c(y,\eta ) = e^{c(|y|^{1/s}+|\eta |^{1/s})}$,
and let $\mathcal P$ respectively $\mathcal P _0$ be the set of all
$\omega \in \mascP _E(\rr {2d})$ such that
$$
\omega (x+y,\xi +\eta )\lesssim \omega (x,\xi )v_c(y,\eta ),
$$
for some $c>0$ respectively for every $c>0$. Then
\begin{alignat*}{2}
\bigcap _{\omega \in \mathcal P}M^{p,q}_{(\omega )}(\rr d) &= \Sigma _s(\rr d),
&\quad \phantom{\text{and}}\quad
\bigcup _{\omega \in \mathcal P}M^{p,q}_{(1/\omega )}(\rr d) &= \Sigma _s'(\rr d),
\\[1ex]
\bigcap _{\omega \in \mathcal P_0}M^{p,q}_{(\omega )}(\rr d) &= \maclS _s(\rr d),
&\quad  
\bigcup _{\omega \in \mathcal P_0}M^{p,q}_{(1/\omega )}(\rr d) &= \maclS _s'(\rr d),
\end{alignat*}
$$
\Sigma _s(\rr d)\hookrightarrow M^{p,q}_{(v_c )}(\rr d) \hookrightarrow  \maclS _s(\rr d)
\quad \text{and}\quad
\maclS _s'(\rr d) \hookrightarrow M^{p,q}_{(1/v_c )}(\rr d) \hookrightarrow \Sigma _s'(\rr d).
$$
(Cf. Proposition 4.5 in \cite{CPRT10}, Proposition 4. in \cite{GZ},
Corollary 5.2 in \cite{PT1} or Theorem 4.1 in \cite{Te2}. See also
\cite[Theorem 3.9]{To11} for an extension of these inclusions to broader classes of
Gelfand-Shilov and modulation spaces.)
\end{rem}

\par

\subsection{Schatten-von Neumann classes}

\par

Next we recall some properties on Schatten-von Neumann classes.
Let $\mascH _1$ and $\mascH _2$ be
Hilbert spaces, and let $T$ be a linear
map from $\mascH _1$ to $\mascH _2$. For
every integer $j\ge 1$, the \emph{singular number}  of $T$ of
order $j$ is given by
$$
\sigma _j(T) = \sigma _j(\mascH _1,\mascH _2,T)
\equiv \inf \nm {T-T_0}{\mascH _1\to \mascH _2},
$$
where the infimum is taken over all linear operators $T_0$ from $\mascH _1$
to $\mascH _2$ with rank at most $j-1$. Therefore, $\sigma _1(T)$
equals $\nm T{\mascH _1\to \mascH _2}$, and $\sigma _j(T)$ is
non-negative which decreases with $j$.

\par

For any $p\in (0,\infty ]$ we set
$$
\nm T{\mascI _p} = \nm T{\mascI _p(\mascH _1,\mascH _2)}
\equiv \nm { \{ \sigma _j(\mascH _1,\mascH _2,T) \} _{j=1}^\infty}{l^p}
$$
(which might attain $+\infty$). The operator $T$ is called a \emph{Schatten-von
Neumann operator} of order $p$ from $\mascH _1$ to $\mascH _2$, if
$\nm T{\mascI _p}$ is finite, i.{\,}e.
$\{ \sigma _j(\mascH _1,\mascH _2,T) \} _{j=1}^\infty$ should belong to $l^p$.
The set of all Schatten-von Neumann operators of order $p$ from
$\mascH _1$ to $\mascH _2$ is denoted by $\mascI _p =
\mascI _p(\mascH _1,\mascH _2)$. We note that
$\mascI _\infty(\mascH _1,\mascH _2)$ agrees with $\maclB (\mascH _1
,\mascH _2)$ (also in norms), the set of linear and bounded operators
from $\mascH _1$ to $\mascH _2$. If $p<\infty$, then
$\mascI _p(\mascH _1,\mascH _2)$ is contained in $\maclK(\mascH _1
,\mascH _2)$, the set of linear and compact operators from $\mascH _1$
to $\mascH _2$. The spaces $\mascI _p(\mascH _1,\mascH _2)$ for
$p\in (0,\infty ]$ and $\maclK(\mascH _1 ,\mascH _2)$ are quasi-Banach
spaces which are Banach spaces when $p\ge 1$. Furthermore,
$\mascI _2(\mascH _1,\mascH _2)$ is a Hilbert space and agrees with the
set of Hilbert-Schmidt operators from $\mascH _1$ to $\mascH _2$ (also in
norms). We set $\mascI  _p(\mascH )=\mascI  _p(\mascH ,\mascH )$.

\par

The set $\mascI _1(\mascH _1,\mascH _2)$ is the set of trace-class
operators from $\mascH _1$ to $\mascH _2$, and $\nm \cdo
{\mascI _1 (\mascH _1,\mascH _2)}$ coincide with the trace-norm. If in addition
$\mascH _1=\mascH _2=\mascH$, then the trace
$$
\operatorname{Tr}_\mascH (T) \equiv \sum _{\alpha} (Tf_\alpha ,f_\alpha)_{\mascH}
$$
is well-defined and independent of the orthonormal basis $\{ f_\alpha \}_{\alpha}$
in $\mascH$.

\par

Now let $\mascH _3$ be another Hilbert space
and let $T_k$ be a linear and continuous operator from
$\mascH _k$ to $\mascH _{k+1}$, $k=1,2$. Then we recall that the H{\"o}lder
relation 
\begin{equation}\label{SchattenComp}
\begin{gathered}
\nm {T_2\circ T_1}{\mascI _{r}(\mascH _1,\mascH _3)}\le 
\nm {T_1}{\mascI _{p_1}(\mascH _1,\mascH _2)}
\nm {T_2}{\mascI _{p_2}(\mascH _2,\mascH _3)}
\\[1ex]
\text{when}\quad
\frac 1{p_1}+\frac 1{p_2}=\frac 1r
\end{gathered}
\end{equation}
(cf. e.{\,}g. \cite{Si,To13}).

\par

In particular, the map $(T_1,T_2)\mapsto T_2^*\circ T_1$ is continuous from 
$\mascI _p(\mascH _1,\mascH _2)\times \mascI _{p'}(\mascH _1,\mascH _2)$
to $\mascI _1(\mascH _1)$, giving that
\begin{equation}\label{Eq:SchattenScalar}
(T_1,T_2)_{\mascI _2(\mascH _1,\mascH _2)}\equiv
\operatorname{Tr}_{\mascH _1}(T^*_2\circ T_1)
\end{equation}
is well-defined and continuous from $\mascI _p(\mascH _1,\mascH _2)\times
\mascI _{p'}(\mascH _1,\mascH _2)$ to $\mathbf C$. If $p=2$, then
the product, defined by \eqref{Eq:SchattenScalar} agrees with the scalar product in
$\mascI _2(\mascH _1,\mascH _2)$.

\par

The proof of the following result is omitted, since it can be found in e.{\,}g.
\cite{BS,Si}.

\par

\begin{prop}\label{Prop:SchattenDual}
Let $p\in [1,\infty]$, $\mascH _1$ and $\mascH _2$ be Hilbert spaces, and let $T$
be a linear and continuous map from $\mascH _1$ to $\mascH _2$. Then the
following is true:
\begin{enumerate}
\item if $q\in [1,p' ]$, then
$$
\nm T{\mascI _p(\mascH _1,\mascH _2)}
=\sup |(T,T_0)_{\mascI _2(\mascH _1,\mascH _2)}|,
$$
where the supremum is taken over all $T_0\in \mascI
_q(\mascH _1,\mascH _2)$ such that $\nm {T_0}{\mascI
_{p'}(\mascH _1,\mascH _2)}\le 1$;

\vrum

\item if in addition $p<\infty$, then the dual of $\mascI
_p(\mascH _1,\mascH _2)$ can be identified through the form
\eqref{Eq:SchattenScalar}.
\end{enumerate}
\end{prop}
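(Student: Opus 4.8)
The plan is to reduce both parts to the scalar duality $(l^p)^*=l^{p'}$ through singular value (Schmidt) expansions, using \eqref{SchattenComp} for one inequality in (1). I would first dispose of the inequality
$$
\sup |(T,T_0)_{\mascI _2(\mascH _1,\mascH _2)}|\le \nm T{\mascI _p(\mascH _1,\mascH _2)},
$$
where the supremum is over $T_0\in \mascI _q(\mascH _1,\mascH _2)$ with $\nm {T_0}{\mascI _{p'}(\mascH _1,\mascH _2)}\le 1$: by \eqref{Eq:SchattenScalar}, $(T,T_0)_{\mascI _2}=\operatorname{Tr}_{\mascH _1}(T_0^*\circ T)$, and \eqref{SchattenComp} with $r=1$, $p_1=p$, $p_2=p'$ together with $|\operatorname{Tr}_{\mascH _1}(S)|\le \nm S{\mascI _1}$ gives $|(T,T_0)_{\mascI _2}|\le \nm T{\mascI _p}\nm {T_0}{\mascI _{p'}}$. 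Since $q\le p'$, the test set is contained in the unit ball of $\mascI _{p'}$, so the bound follows.

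For the converse inequality in (1), suppose first $p<\infty$, so $\mascI _p(\mascH _1,\mascH _2)\subseteq \maclK (\mascH _1,\mascH _2)$. If $\nm T{\mascI _p}<\infty$, I would use the Schmidt decomposition $T=\sum _j\sigma _j(T)(\cdot ,e_j)_{\mascH _1}f_j$ with $\{ e_j\}\subseteq \mascH _1$, $\{ f_j\}\subseteq \mascH _2$ orthonormal, choose a finitely supported sequence $\{ c_j\}$ with $c_j\ge 0$ and $\nm {\{ c_j\} }{l^{p'}}\le 1$, and set $T_0=\sum _jc_j(\cdot ,e_j)_{\mascH _1}f_j$; this $T_0$ is of finite rank, hence belongs to every $\mascI _q$, satisfies $\nm {T_0}{\mascI _{p'}}=\nm {\{ c_j\} }{l^{p'}}\le 1$, and a direct computation yields $(T,T_0)_{\mascI _2}=\sum _j\sigma _j(T)c_j$. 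Taking the supremum over such $\{ c_j\}$ recovers $\nm {\{ \sigma _j(T)\} }{l^p}=\nm T{\mascI _p}$ by scalar $l^p$–$l^{p'}$ duality, the case $p=1$ being handled by the truncations $c_j=1$ for $j\le N$. If instead $\nm T{\mascI _p}=\infty$ (in particular if $T$ is non-compact), one constructs, for each $N$, finitely many orthonormal vectors on which $T$ acts as an almost-isometry times a scalar bounded away from $0$, and the same type of test operators shows the supremum is $+\infty$. When $p=\infty$ one has $q=1$ and $\nm T{\mascI _\infty}=\nm T{\mascH _1\to \mascH _2}$; here, given $\ep >0$, I would pick unit vectors $u\in \mascH _1$ and $w\in \mascH _2$ with $|(Tu,w)_{\mascH _2}|\ge \nm T{\mascH _1\to \mascH _2}-\ep$, set $T_0=(\cdot ,u)_{\mascH _1}w$, and use $\nm {T_0}{\mascI _1}=1$ together with $(T,T_0)_{\mascI _2}=(Tu,w)_{\mascH _2}$. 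This settles (1).

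For (2), fix $p\in [1,\infty )$. Part (1) with $p$ replaced by $p'$ and $q=1$, combined with the density of finite rank operators in $\mascI _p$ (valid since $p<\infty$) and the symmetry $|(S,T_0)_{\mascI _2}|=|(T_0,S)_{\mascI _2}|$, shows that $T_0\mapsto (\,\cdot \,,T_0)_{\mascI _2}$ embeds $\mascI _{p'}(\mascH _1,\mascH _2)$ isometrically into the dual of $\mascI _p(\mascH _1,\mascH _2)$. For surjectivity, given a bounded linear functional $\ell$ on $\mascI _p(\mascH _1,\mascH _2)$, I would test it on rank-one operators: since $\nm {(\cdot ,u)_{\mascH _1}w}{\mascI _p}=\nm u{\mascH _1}\nm w{\mascH _2}$, the map $(w,u)\mapsto \ell ((\cdot ,u)_{\mascH _1}w)$ is a bounded sesquilinear form on $\mascH _2\times \mascH _1$, hence equals $(w,T_0u)_{\mascH _2}$ for a unique bounded $T_0\colon \mascH _1\to \mascH _2$. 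A computation using \eqref{Eq:SchattenScalar} then gives $\ell (S)=(S,T_0)_{\mascI _2}$ for every finite rank $S$, and applying the isometric bound above to the functional $S\mapsto (S,T_0)_{\mascI _2}$ shows $\nm {T_0}{\mascI _{p'}}\le \nmm \ell$, so $T_0\in \mascI _{p'}(\mascH _1,\mascH _2)$ and the identity $\ell =(\,\cdot \,,T_0)_{\mascI _2}$ extends to all of $\mascI _p(\mascH _1,\mascH _2)$ by density.

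I expect the main obstacle to be the converse inequality in (1): it is where the singular value decomposition and the scalar duality do the real work, and it needs case-by-case attention at the endpoints — at $p=1$, where finitely supported sequences fail to be dense in $l^{p'}=l^\infty$ (dealt with by truncation), and at $p=\infty$, where no Schmidt expansion is available and one must instead extract near-optimal rank-one test operators — as well as for non-compact $T$ when $p<\infty$.
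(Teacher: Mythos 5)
The paper gives no proof of this proposition at all, referring instead to Birman--Solomyak and Simon \cite{BS,Si}; your argument --- H{\"o}lder's inequality \eqref{SchattenComp} plus $|\operatorname{Tr}(S)|\le \nm S{\mascI _1}$ for the upper bound in (1), Schmidt expansions with finite-rank test operators and scalar $l^p$--$l^{p'}$ duality for the lower bound, and Riesz representation of the bounded sesquilinear form $(w,u)\mapsto \ell ((\cdot ,u)_{\mascH _1}w)$ together with density of finite-rank operators for (2) --- is exactly the classical proof found in those references, and it is correct. The only step deserving a touch more detail is your sketch for non-compact $T$ with $p<\infty$: it goes through by choosing, for each $N$, an $N$-dimensional subspace $V_N\subseteq \mascH _1$ with $\nm {Tf}{\mascH _2}\ge c\nm f{\mascH _1}$ on $V_N$ and using the Schmidt decomposition of the finite-rank operator $T\circ P_{V_N}$ to build the test operator, which produces pairings of size at least $cN^{1/p}$.
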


\par

Later on we are especially interested of finding necessary and sufficient
conditions of symbols, in order for the corresponding pseudo-differential
operators to belong to $\mascI _p(\mascH _1,\mascH _2)$, where
$\mascH _1$ and $\mascH _2$ satisfy
$$
\Sigma _1(\rr d)\hookrightarrow \mascH _1,\mascH _2
\hookrightarrow \Sigma _1'(\rr d).
$$
Therefore, for such Hilbert spaces and $p\in (0,\infty ]$, let
\begin{align}
s_{A,p}(\mascH _1,\mascH _2) &\equiv
\sets {a\in \maclS _{1/2}'(\rr {2d})}{\op _A(a)\in \mascI
_p(\mascH _1,\mascH_2 )}\notag
\intertext{and}
\nm a{s_{A,p}(\mascH _1,\mascH _2)}
&\equiv \nm {\op _A(a)}{\mascI _p(\mascH _1,\mascH _2)}.
\label{SchattNormId}
\end{align}
Since the map $a\mapsto \op _A(a)$ is bijective from $\maclS _s'(\rr {2d})$
to the set of all linear and continuous operators from $\maclS _s(\rr d)$ to 
$\maclS _s'(\rr d)$, when $s\ge \frac 12$, it follows from the definitions
that the map $a\mapsto \op _A(a)$ restricts to a bijective and isometric map
from $s_{A,p}(\mascH _1,\mascH _2)$ to $\mascI _p(\mascH _1, \mascH _2)$.

\par

Usually it is assumed that $\mascH _1$ and $\mascH _2$ are tempered
in the sense of Definition 3.1 in \cite{To13}, or more restricted that
$\mascH _j=M^2_{(\omega _j)}(\rr d)$, for some $\omega _j\in \mascP _E(\rr {2d})$,
$j=1,2$. For conveniency we therefore set
$$
s_{A,p}(\omega _1,\omega _2) \equiv s_{A,p}(M^2_{(\omega _1)},M^2_{(\omega _2)}).
$$
We remark that the reader who is not interested in the most general setting may only consider
the case when $\mascH _j=M^2_{(\omega _j)}(\rr d)$, with $\omega _j\in \mascP _E(\rr {2d})$.
In this case, the $L^2$-dual of $\mascH _j$ is given by $M^2_{(1/\omega _j)}(\rr d)$.

%

\par

The latter bijectivity implies that Proposition \ref{Prop:SchattenDual} carries over
to analogous properties for $s_{A,p}(\mascH _1,\mascH _2)$ spaces. In the next section
we show that related results can be proved when the $s_{A,2}$ product and
$s_{p',A}(\mascH _1,\mascH _2)$ can be replaced by the $L^2$ product
and $s_{p',A}(\mascH _1',\mascH _2')$ when $\mascH _j$ are tempered with
$L^2$-duals $\mascH _j'$, $j=1,2$.

%
%
%
%

\par

%


\par

\section{Algebraic and continuity
properties}\label{sec2}

\par

In this section we deduce basic results for pseudo-differential operators
with symbols in modulation spaces, where the corresponding weights belong
to $\mascP _E$. The arguments are in general similar as corresponding
results in \cite{To6, To9}.

\par

The continuity results that we are focused on are especially Theorems
\ref{ThmA.2} and \ref{ThmA.3}. Here Theorem \ref{ThmA.2} deals with
pseudo-differential operators with symbols in modulation
spaces, which act on modulation spaces. Theorem \ref{ThmA.3} gives necessary
and sufficient conditions on symbols such that corresponding pseudo-differential
operators are Schatten-von Neumann operators of certain degrees.

\par

In Propositions \ref{PropA.4} and \ref{PropA.5} we deduce preparatory
results on Wigner distributions and pseudo-differential calculus in the context
of modulation space theory.

\par

In the last part we deduce composition properties for pseudo-differential
operators with symbols in modulation spaces. Especially we extend certain
results in \cite{CTW,HTW}.

\par

Let $s\ge \frac 12$ and let $K\in \maclS _s'(\rr {d_2+d_1})$.
Then $K$ gives rise to a linear and continuous operator $T=T_K$ from
$\maclS _s(\rr {d_1})$ to $\maclS _s'(\rr {d_2})$, defined by the formula
\begin{equation}\label{Eq:(A.1)}
Tf (x) = \scal {K(x,\cdo )}f ,
\end{equation}
which should be interpreted as \eqref{pre(A.1)} when $f\in
\maclS _s (\rr {d_1})$ and $g\in \maclS _s (\rr {d_2})$.

\par

%

\par

Before presenting the continuity properties of operators with kernels or symbols
in modulation spaces, we present the relations between the involved weight functions.
The weights $\omega ,\omega _0\in \mascP _E(\rr {4d})$ and
$\omega _1,\omega _2\in \mascP _E(\rr {2d})$ are in general related to each others
by the formulae
%
%
\begin{alignat}{2}
\frac {\omega _2(x,\xi )}{\omega _1(y,\eta )} &\lesssim \omega (x,y,\xi,-\eta ),
&\quad x,\xi &\in \rr {d_2},\ y,\eta \in \rr {d_1}
\label{Eq:(A.2)}
\intertext{or}
\frac {\omega _2(x,\xi )}{\omega _1(y,\eta )} &\asymp \omega (x,y,\xi,-\eta ),
&\quad x,\xi &\in \rr {d_2},\ y,\eta \in \rr {d_1},\tag*{(\ref{Eq:(A.2)})$'$}
\end{alignat}
and
\begin{align}
\omega (x,y,\xi ,\eta ) &\asymp  \omega _0(x-A(x-y),A^*\xi -(I-A^*)\eta ,
\xi +\eta ,y-x ),  
\notag
\\
&  \hspace{6cm} x,y,\xi ,\eta  \in \rr d,
\label{Eq:(A.3)}
\intertext{or equivalently,}
\omega _0(x,\xi ,\eta ,y) &\asymp  \omega (x-Ay,x+(I-A)y,\xi +(I-A^*) \eta ,-\xi +A^*\eta ),
\notag
\\
 &  \hspace{6cm} x,y,\xi ,\eta  \in \rr d.
\tag*{(\ref{Eq:(A.3)})$'$}
\end{align}
We note that \eqref{Eq:(A.2)} and \eqref{Eq:(A.3)} imply
\begin{equation}\label{Eq:(A.4)}
\frac {\omega _2(x,\xi  )}{\omega _1
(y,\eta )} \lesssim \omega _0( x-A(x-y),A^*\xi +(I-A^*)\eta ,\xi -\eta ,y-x ),
\end{equation}
and that  \eqref{Eq:(A.2)}$'$ and \eqref{Eq:(A.3)} imply
\begin{equation}\tag*{(\ref{Eq:(A.4)})$'$}
\frac {\omega _2(x,\xi  )}{\omega _1
(y,\eta )} \asymp \omega _0( x-A(x-y),A^*\xi +(I-A^*)\eta ,\xi -\eta ,y-x ),
\end{equation}

\par

The Lebesgue exponents of the modulation spaces should satisfy
conditions of the form
\begin{equation}\label{Eq:(A.5)}
\frac 1{p_1}-\frac 1{p_2}=\frac 1{q_1}-\frac 1{q_2} = 1-\frac 1p-\frac 1q,
\quad q\le p_2,q_2\le p ,
\end{equation}
or
\begin{equation}\label{Eq:(A.6)}
p_1\le p\le p_2,\quad  q_1\le \min (p,p') \quad \text{and}
\quad q_2\ge \max (p,p') .
\end{equation}

\par

The first result is essentially a fundamental kernel theorem for operators in the framework of
modulation space theory, and corresponds to Schwartz kernel theorem for. The
result goes back to \cite{Fe4} in the unweighted case (see also \cite{Gc2}).
The general case follows by combining Theorem A.1 in \cite{To13} with Proposition
\ref{PropA.5} below. The details are left for the reader.

\par

\begin{thm}\label{ThmA.1}
Let $\omega _j\in \mascP _E(\rr {2d_j})$ for
$j=1,2$ and $\omega \in \mathscr P_E(\rr {2d_2+2d_1})$ be
such that \eqref{Eq:(A.2)}$'$ holds. Also let $T$ be a linear and
continuous map from $\maclS _{1/2}(\rr {d_1})$ to
$\maclS _{1/2}' (\rr {d_2})$. Then the following conditions are
equivalent:
\begin{enumerate}
\item[(1)] $T$ extends to a continuous mapping from
$M^1_{(\omega _1)}(\rr {d_1})$ to $M^\infty _{(\omega _2)}
(\rr {d_2})$;

\vrum

\item[(2)] there is a unique $K\in M^\infty _{(\omega)}(\rr {d_2+d_1})$
such that \eqref{Eq:(A.1)} holds for every $f\in \maclS _{1/2}(\rr {d_1})$;

\vrum

\item[(3)] if in addition $d_1=d_2=d$, $A\in \GL (d,\mathbf R)$ and
\eqref{Eq:(A.3)} holds, then there is a
unique $a\in M^\infty _{(\omega _0)}(\rr {2d})$ such that $Tf=\op _A(a)f$
when $f\in \maclS _{1/2}(\rr d)$.
\end{enumerate}

\par

Furthermore, if {\rm{(1)--(2)}} are fulfilled, then $\nm T{M^1_{(\omega _1)}
\to M^\infty _{(\omega _2)}} \asymp \nm
K{M^\infty _{(\omega )}}$, and if in addition $d_1=d_2$, then $ \nm
K{M^\infty _{(\omega )}} \asymp \nm
a{M^\infty _{(\omega _0)}}$.
\end{thm}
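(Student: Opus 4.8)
The plan is to reduce Theorem~\ref{ThmA.1} to results already available in the literature, using the passage between kernels and symbols provided by Proposition~\ref{PropA.5} below. First I would establish the equivalence of (1) and (2), which is the genuine kernel-theorem content. The key observation is that, after conjugating by the short-time Fourier transform, membership of $T$ in $\maclB(M^1_{(\omega_1)},M^\infty_{(\omega_2)})$ translates into a pointwise estimate on $V_\Phi K$, where $\Phi = \phi_2\otimes\overline{\phi_1}$ is a tensor window. Concretely, using the identity relating $V_\Phi(g\otimes\overline f)$ to $V_{\phi_2}g$ and $V_{\phi_1}f$, together with \eqref{pre(A.1)}, one gets
\begin{equation*}
(Tf,g)_{L^2} = (K,g\otimes\overline f)_{L^2} = \big(V_\Phi K,\, V_{\phi_2}g\cdot\overline{V_{\phi_1}f}\,\big)_{L^2},
\end{equation*}
so that the operator norm of $T$ from $M^1_{(\omega_1)}$ to $M^\infty_{(\omega_2)}$ is comparable to $\nm{V_\Phi K}{L^\infty_{(\omega)}}$ precisely when \eqref{Eq:(A.2)}$'$ holds, which identifies it with $\nm K{M^\infty_{(\omega)}}$. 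Uniqueness of $K$ is automatic from Schwartz's kernel theorem in the Gelfand--Shilov setting (already quoted in the excerpt), since $g\otimes\overline f$ ranges over a total set. This is the step I would cite from Theorem~A.1 in \cite{To13}, and indeed the cleanest route is simply to invoke that result for the kernel formulation and then only do the symbol part here.

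Next I would treat (3). By \eqref{homeoF2tmap} the map $a\mapsto K_{a,A}$ is a homeomorphism on $\maclS_{1/2}'(\rr{2d})$, and it is given by a partial Fourier transform composed with the linear change of variables $(x,y)\mapsto(x-A(x-y),x-y)$ as in \eqref{atkernel}. So the task is to show this map carries $M^\infty_{(\omega_0)}$ isometrically (up to equivalence of norms) onto $M^\infty_{(\omega)}$ when $\omega_0$ and $\omega$ are related by \eqref{Eq:(A.3)}. Both building blocks behave well on modulation spaces: the partial Fourier transform $\mascF_2$ acts on $M^\infty$ by permuting phase-space variables and conjugating the weight accordingly, and an invertible linear substitution $x\mapsto Bx$ transforms $M^\infty_{(\omega)}$ into $M^\infty_{(\omega\circ \mathcal{B})}$ for the induced symplectic-type map $\mathcal B$ on phase space. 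Composing these two bookkeeping steps and matching the resulting weight with the one prescribed in \eqref{Eq:(A.3)} gives the claim, and then $\nm K{M^\infty_{(\omega)}}\asymp\nm a{M^\infty_{(\omega_0)}}$. This is exactly what Proposition~\ref{PropA.5} is set up to provide, so in the write-up I would just say (3) follows by combining (2) with Proposition~\ref{PropA.5}.

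The main obstacle, and the only place where care is genuinely needed, is the weight bookkeeping: one must verify that the phase-space change of variables induced by $(x,y)\mapsto(x-A(x-y),x-y)$ together with $\mascF_2$ transforms the frequency variables exactly so that $\omega_0$ composed with that map equals (up to $\asymp$) the $\omega$ appearing in the kernel estimate — this is precisely the content of \eqref{Eq:(A.3)} versus \eqref{Eq:(A.3)}$'$, and getting the signs on $\eta$, the appearance of $A^*$ and $I-A^*$, and which slot is $\xi+\eta$ all correct is the delicate part. Everything else is a routine chain of known homeomorphisms. Accordingly my write-up would be short: invoke Theorem~A.1 of \cite{To13} for the equivalence of (1) and (2) and the norm comparison, then obtain (3) and its norm comparison by applying Proposition~\ref{PropA.5} to pass between the kernel $K$ and the $A$-symbol $a$, noting that the weight identities \eqref{Eq:(A.2)}$'$ and \eqref{Eq:(A.3)} are exactly the compatibility conditions those two results require. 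The details, being purely computational changes of variables, I would leave to the reader, as the excerpt itself already announces.
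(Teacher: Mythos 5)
Your proposal is correct and follows essentially the same route as the paper: the equivalence of (1) and (2), with the norm comparison, is taken from Theorem A.1 in \cite{To13}, and (3) together with $\nm K{M^\infty_{(\omega)}}\asymp\nm a{M^\infty_{(\omega_0)}}$ is obtained from Proposition \ref{PropA.5}, the weight compatibility being exactly \eqref{Eq:(A.2)}$'$ and \eqref{Eq:(A.3)}. The extra STFT/tensor-window sketch you give for (1)$\Leftrightarrow$(2) is a reasonable outline of what the cited result proves, but it adds nothing beyond the paper's argument.
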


\par

The next two results extend Theorems A.2 and A.3 in \cite{To13}.

\par

\begin{thm}\label{ThmA.2}
Let $A\in \GL (d,\mathbf R)$ and $p,q,p_j,q_j\in [1,\infty ]$ for
$j=1,2$, satisfy \eqref{Eq:(A.5)}. Also let $\omega _0\in \mascP _E(\rr {2d}\oplus \rr {2d})$
and $\omega _1,\omega _2\in \mascP _E(\rr {2d})$ satisfy \eqref{Eq:(A.4)}.
If $a\in M^{p,q}_{(\omega )}(\rr {2d})$,
then $\op _A(a)$ from $\maclS _{1/2}(\rr d)$ to $\maclS _{1/2}'(\rr d)$
extends uniquely to a continuous mapping from $M^{p_1,q_1}_{(\omega
_1)}(\rr d)$ to $M^{p_2,q_2}_{(\omega _2)}(\rr d)$, and
\begin{equation}\label{Eq:(A.7)}
\nm {\op _A(a)}{M^{p_1,q_1}_{(\omega _1)}\to
M^{p_2,q_2} _{(\omega _2)}} \lesssim \nm
a{M^{p,q} _{(\omega _0)}}.
\end{equation}

\par

Moreover, if in addition $a$ belongs to the closure of $\maclS _{1/2}$ under
the $M^{p,q}_{(\omega _0 )}$ norm, then $\op _A(a)\, :\, M^{p_1,q_1}_{(\omega _1)}\to
M^{p_2,q_2}_{(\omega _2)}$ is compact.
\end{thm}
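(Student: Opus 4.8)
\textbf{Proof strategy for Theorem \ref{ThmA.2}.}
The plan is to reduce the matrix-parameterized statement to the already known case $A = \frac 12 I$ (i.e.\ the Weyl case, which is Theorems A.2 and A.3 in \cite{To13}), by transporting the symbol via Proposition \ref{Prop:CalculiTransfer} and tracking what happens to the weight. First I would fix $a \in M^{p,q}_{(\omega _0)}(\rr {2d})$ and use the bijectivity of $b \mapsto \op _A(b) = \op _{1/2}(b^\#)$ to produce the unique Weyl symbol $b = b(a)$ with $\op _A(a) = \op ^w(b)$, which by \eqref{calculitransform} is $b = e^{i\scal{(\frac12 I - A)D_\xi}{D_x}} a$. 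The key analytic input is that this operator $e^{i\scal{B D_\xi}{D_x}}$, for a fixed real matrix $B$, is continuous on $M^{p,q}_{(\omega _0)}(\rr {2d})$ onto $M^{p,q}_{(\widetilde\omega _0)}(\rr {2d})$, where $\widetilde\omega _0$ is $\omega _0$ composed with the corresponding linear change of phase-space variables; this is a standard fact about metaplectic/chirp operators on modulation spaces (the symbol of the ``shear'' is a quadratic exponential, and such operators intertwine modulation spaces with linearly transformed weights — cf.\ the discussion around \eqref{homeoF2tmap} and \cite{CaTo,Tr}, and it is exactly the mechanism already used to make sense of the right-hand side of \eqref{calculitransform}). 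Then I would feed $b$ into the Weyl version of the theorem: the weight hypotheses \eqref{Eq:(A.4)} for $A$ translate precisely into \eqref{Eq:(A.4)} for $A = \frac12 I$ applied to the transformed weight $\widetilde\omega _0$, because \eqref{Eq:(A.3)} was set up so that $\omega$ (the kernel weight) is the common object, with $\omega _0$ obtained from $\omega$ by an $A$-dependent linear substitution; composing the two substitutions for $A$ and for $\frac12 I$ is exactly the phase-space change realized by $e^{i\scal{(\frac12 I - A)D_\xi}{D_x}}$. This yields \eqref{Eq:(A.7)} with $\op ^w(b)$ in place of $\op _A(a)$ and $\nm{b}{M^{p,q}_{(\widetilde\omega _0)}}$ on the right, and the two norms are equivalent by the continuity just quoted, so \eqref{Eq:(A.7)} follows.

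For the uniqueness of the extension I would argue as usual: $\maclS _{1/2}(\rr d)$ is dense in $M^{p_1,q_1}_{(\omega _1)}(\rr d)$ when $p_1, q_1 < \infty$ by Proposition \ref{p1.4}{\,}(4), so the bounded extension is unique; the remaining endpoint cases ($p_1$ or $q_1 = \infty$) are handled by the standard weak-$*$ density statement in Proposition \ref{p1.4}{\,}(4) together with the fact that $\op _A(a)$ already acts continuously $\maclS _{1/2} \to \maclS _{1/2}'$ and the target is a dual space, so the extension is forced by testing against $\maclS _{1/2}$. Alternatively one can invoke that Theorem \ref{ThmA.1} and Proposition \ref{PropA.5} (the kernel picture) make the extension canonical on the level of kernels, which is transport-invariant under $A$.

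For the compactness addendum: if $a$ lies in the $M^{p,q}_{(\omega _0)}$-closure of $\maclS _{1/2}$, pick $a_n \in \maclS _{1/2}$ with $a_n \to a$ in that norm. Each $\op _A(a_n)$ maps $\maclS _{1/2}$ to $\maclS _{1/2}$ and, being given by a nice symbol, extends to a compact (indeed, by Proposition \ref{p1.4}{\,}(2) a nuclear-type) operator $M^{p_1,q_1}_{(\omega _1)} \to M^{p_2,q_2}_{(\omega _2)}$; here one uses that a Schwartz-type symbol yields an operator factoring through the embeddings $M^{p_1,q_1}_{(\omega _1)} \hookrightarrow \Sigma _1' $ and $\Sigma _1 \hookrightarrow M^{p_2,q_2}_{(\omega _2)}$, hence compact. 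By \eqref{Eq:(A.7)} applied to $a - a_n$ we get $\nm{\op _A(a) - \op _A(a_n)}{M^{p_1,q_1}_{(\omega _1)} \to M^{p_2,q_2}_{(\omega _2)}} \lesssim \nm{a - a_n}{M^{p,q}_{(\omega _0)}} \to 0$, so $\op _A(a)$ is an operator-norm limit of compact operators and therefore compact.

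\textbf{Main obstacle.} The crux is the bookkeeping of the weights: verifying that the linear phase-space substitution induced by $e^{i\scal{(\frac12 I - A)D_\xi}{D_x}}$ carries a weight satisfying \eqref{Eq:(A.4)} for general $A$ to one satisfying \eqref{Eq:(A.4)} for $A = \frac12 I$, with all four blocks of arguments $(x - A(x-y), A^*\xi + (I-A^*)\eta, \xi - \eta, y - x)$ tracked correctly, and that the resulting weight is again in $\mascP _E$ (which is automatic, since $\mascP _E$ is invariant under linear changes of variables). This is a finite linear-algebra computation with the substitutions in \eqref{Eq:(A.3)} and \eqref{Eq:(A.3)}$'$, but it is the step where an error would invalidate the reduction; everything else is soft functional analysis or a direct appeal to the Weyl case in \cite{To13}.
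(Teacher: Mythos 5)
Your proposal is correct in substance, but it follows a genuinely different route from the paper's own proof of Theorem \ref{ThmA.2}. The paper argues directly for general $A$: it combines the identity \eqref{tWigpseudolink} with the $A$-Wigner estimate of Proposition \ref{PropA.4} (applied with exponents $p',q',p_1,q_1,p_2',q_2'$) and the duality statements in Proposition \ref{p1.4}{\,}(3)--(4), treats the endpoint $p=\infty$, $q=1$ separately by narrow convergence of symbols, and gets compactness by noting that $\op _A(a_0)$ is trace-class (hence compact) for $a_0\in \maclS _{1/2}$ via Theorem \ref{ThmA.3} and then passing to the operator-norm limit. You instead reduce to the Weyl case of \cite{To13} by transporting the symbol with $e^{i\scal {BD_\xi}{D_x}}$ and tracking the weight; this is exactly the mechanism the paper itself uses later (Proposition \ref{Prop:ExpOpSTFT} and the proofs of Theorems \ref{Thm:Alg1}--\ref{Thm:Alg2}), so there is no circularity, and your ``main obstacle'' does close: with the correct transfer matrix $B=A-\tfrac 12 I$ (note your sign is reversed; by \eqref{calculitransform} one has $b=e^{i\scal {(A-\frac 12 I)D_\xi}{D_x}}a$, not $e^{i\scal {(\frac 12 I-A)D_\xi}{D_x}}a$), Proposition \ref{Prop:ExpOpSTFT} gives $\nm b{M^{p,q}_{(\omega _{0,B})}}\asymp \nm a{M^{p,q}_{(\omega _0)}}$ with $\omega _{0,B}(x,\xi ,\eta ,y)=\omega _0(x+By,\xi +B^*\eta ,\eta ,y)$, and a short computation shows $\omega _{0,B}\big (\tfrac {x+y}2,\tfrac {\xi +\eta}2,\xi -\eta ,y-x\big )=\omega _0(x-A(x-y),A^*\xi +(I-A^*)\eta ,\xi -\eta ,y-x)$, so \eqref{Eq:(A.4)} for $A$ becomes \eqref{Eq:(A.4)} for $\tfrac 12 I$ with the transformed weight. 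What each approach buys: the paper's direct argument is self-contained (it needs only Proposition \ref{PropA.4}, which is proved from Lemma \ref{Lemma:STFTAWigner}) and makes the endpoint and compactness issues explicit; yours is shorter modulo the external Weyl-case theorem in \cite{To13} and Proposition \ref{Prop:ExpOpSTFT}, at the price of the sign/bookkeeping care just described and of a somewhat hand-waving compactness step (your factorization through $\Sigma _1'$ and $\maclS _{1/2}$ works because bounded maps between Banach spaces factoring through a nuclear Fr{\'e}chet--Montel space are compact, but this should be said; alternatively use the paper's trace-class route via Theorem \ref{ThmA.3}).
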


\par

\begin{thm}\label{ThmA.3}
Let $A\in \GL (d,\mathbf R)$ and $p,q,p_j,q_j\in [1,\infty ]$ for
$j=1,2$, satisfy \eqref{Eq:(A.6)}. Also let $\omega _0\in \mascP _E(\rr {2d}\oplus \rr {2d})$
and $\omega _1,\omega _2\in \mascP _E(\rr {2d})$ satisfy \eqref{Eq:(A.4)}$'$.
Then
$$
M^{p_1,q_1}_{(\omega _0)}(\rr {2d}) \hookrightarrow
s_{A,p}(\omega _1,\omega _2)\hookrightarrow 
M^{p_2,q_2}_{(\omega _0)}(\rr {2d}).
$$
\end{thm}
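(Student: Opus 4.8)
The plan is to reduce Theorem \ref{ThmA.3} to the already-known Schatten--von Neumann embeddings for the Weyl calculus (the case $A=\frac12 I$, which is Theorem A.3 in \cite{To13}), and then transfer them to general $A\in \GL(d,\mathbf R)$ via the calculus transformation formula in Proposition \ref{Prop:CalculiTransfer}. More precisely, since $a\mapsto \op_A(a)$ is a bijection from $\maclS_{1/2}'(\rr{2d})$ onto the continuous maps $\maclS_{1/2}(\rr d)\to\maclS_{1/2}'(\rr d)$, and this bijection restricts to an isometric bijection from $s_{A,p}(\omega_1,\omega_2)$ onto $\mascI_p(M^2_{(\omega_1)},M^2_{(\omega_2)})$, it suffices to understand, for fixed $a$, how the modulation space membership of the $A$-symbol $a$ compares to that of the Weyl symbol $b$ determined by $\op_A(a)=\op^w(b)$. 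By Proposition \ref{Prop:CalculiTransfer}, $b=e^{i\scal{(A-\frac12 I)D_\xi}{D_x}}a$, i.e. $b$ is obtained from $a$ by the operator $e^{i\scal{BD_\xi}{D_x}}$ with $B=A-\frac12 I$, which on the Fourier side is multiplication by $e^{i\scal{Bx}{\xi}}$, a chirp.

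The first step is therefore an invariance statement: for any real $d\times d$ matrix $B$, the map $a\mapsto e^{i\scal{BD_\xi}{D_x}}a$ is a homeomorphism of $M^{p,q}_{(\omega_0)}(\rr{2d})$ onto $M^{p,q}_{(\omega_0')}(\rr{2d})$, where $\omega_0'$ is the pullback of $\omega_0$ under the corresponding linear change of variables on the time-frequency side. This is a standard fact: the short-time Fourier transform of $e^{i\scal{BD_\xi}{D_x}}a$ with respect to a suitable (Gaussian) window is, up to a unimodular factor, a linear pullback of $V_\phi a$, because the chirp multiplication $\widehat a\mapsto e^{i\scal{Bx}{\xi}}\widehat a$ corresponds to an upper-triangular symplectic (indeed ``shear'') map of phase space, and modulation spaces are invariant under such metaplectic/linear actions, with the weight transformed accordingly. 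One must check that the weight bookkeeping in the hypotheses \eqref{Eq:(A.2)}$'$--\eqref{Eq:(A.4)}$'$ is exactly matched to this change of variables: the relation \eqref{Eq:(A.4)}$'$ for general $A$ should become, after the chirp transformation, precisely the relation \eqref{Eq:(A.4)}$'$ for $A=\frac12 I$ applied to the transformed weights. This is the purely computational heart of the argument and amounts to verifying that the affine substitutions defining $\omega_0$ from $\omega_1,\omega_2$ compose correctly.

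The second step is to invoke the Weyl-case result: with $p,q,p_j,q_j$ satisfying \eqref{Eq:(A.6)} and the weights satisfying \eqref{Eq:(A.4)}$'$ in the Weyl normalization, Theorem A.3 of \cite{To13} gives
$$
M^{p_1,q_1}_{(\widetilde\omega_0)}(\rr{2d}) \hookrightarrow s_{1/2,p}(\omega_1,\omega_2)\hookrightarrow M^{p_2,q_2}_{(\widetilde\omega_0)}(\rr{2d}),
$$
where $\widetilde\omega_0$ is the transformed weight. Combining the isometry $a\mapsto\op_A(a)\colon s_{A,p}(\omega_1,\omega_2)\to\mascI_p$ with the isometry $b\mapsto\op^w(b)\colon s_{1/2,p}(\omega_1,\omega_2)\to\mascI_p$ and the identity $\op_A(a)=\op^w(e^{i\scal{BD_\xi}{D_x}}a)$, we get that $a\in s_{A,p}(\omega_1,\omega_2)$ iff $e^{i\scal{BD_\xi}{D_x}}a\in s_{1/2,p}(\omega_1,\omega_2)$, with equality of norms. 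Feeding in the Weyl embeddings and then the homeomorphism of Step 1 (which intertwines $M^{p_j,q_j}_{(\omega_0)}$ with $M^{p_j,q_j}_{(\widetilde\omega_0)}$), the chain of embeddings for general $A$ follows, with the embedding constants controlled by the norm of the linear substitution and the weight comparison constants.

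The main obstacle I expect is bookkeeping rather than conceptual: one has to pin down the precise linear change of phase-space variables induced by $e^{i\scal{BD_\xi}{D_x}}$ on the STFT side, track how $\omega_0$ transforms under it, and confirm that the hypothesis \eqref{Eq:(A.4)}$'$ with a general matrix $A$ is \emph{equivalent} to \eqref{Eq:(A.4)}$'$ with $A=\frac12 I$ for the transformed weights --- in other words that the family of admissible weight-tuples is stable under exactly this substitution. Once that is in place, the embedding statement is a one-line consequence of the Weyl case and the isometric identifications, and the compactness/density refinements (should they be needed) transfer verbatim. An alternative, slightly more self-contained route avoids Proposition \ref{Prop:CalculiTransfer} altogether and instead works directly with the link \eqref{tWigpseudolink}, using Proposition \ref{p1.4}(3) to dualize $\mascI_p$ via Proposition \ref{Prop:SchattenDual}, and reducing the embeddings to mapping properties of $a\mapsto W^A_{g,f}$ between modulation spaces (a weighted version of Proposition \ref{PropA.4}); but the chirp-transfer argument above is shorter and reuses the existing machinery most efficiently.
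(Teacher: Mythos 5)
Your proposal is correct, but it follows a different route from the paper. You transfer the general-$A$ statement to the Weyl case via Proposition \ref{Prop:CalculiTransfer} (so that $\op _A(a)=\op ^w(b)$ with $b=e^{i\scal {BD_\xi}{D_x}}a$, $B=A-\tfrac 12I$) together with the shear-type homeomorphism $e^{i\scal {BD_\xi}{D_x}}\colon M^{p,q}_{(\omega _0)}\to M^{p,q}_{(\omega _{0,B})}$, and then quote the Weyl-case embeddings from \cite{To13}; the weight bookkeeping you flag as the crux does indeed close up, since with $B=A-\tfrac 12I$ one has $\tfrac {x+y}2+B(y-x)=x-A(x-y)$ and $\tfrac {\xi +\eta}2+B^*(\xi -\eta )=A^*\xi +(I-A^*)\eta$, so \eqref{Eq:(A.4)}$'$ for $(A,\omega _0)$ is exactly \eqref{Eq:(A.4)}$'$ for $(\tfrac 12I,\omega _{0,B})$, and the shear preserves the mixed $L^{p,q}$ structure because it only translates the first $2d$ variables for fixed last $2d$ variables (this is precisely the paper's Proposition \ref{Prop:ExpOpSTFT}, stated just after the theorem). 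The paper instead proves the result directly for general $A$: the case $p=\infty$ comes from Theorem \ref{ThmA.2} together with Proposition \ref{p1.4}\,(2) and the kernel theorem (Theorem \ref{ThmA.1}), the case $p=1$ follows by duality via Propositions \ref{p1.4}\,(3), \ref{Prop:SchattenDual} and \ref{Prop:SchattenIdenti}, the case $p=2$ is the Hilbert--Schmidt identity $M^2_{(\omega _0)}=s_{A,2}(\omega _1,\omega _2)$ from Proposition \ref{PropA.5}, and general $p$ is obtained by interpolation. Your reduction is shorter and reuses the result being extended (and is in the same spirit as the paper's own proofs of Proposition \ref{Prop:SchattenIdenti} and Theorems \ref{Thm:Alg1}--\ref{Thm:Alg2}), at the cost of importing the full Weyl-case theorem with $\mascP _E$ weights from \cite{To13}; the paper's argument is self-contained within its general-$A$ machinery and additionally exhibits the endpoint-plus-interpolation structure, including the exact $p=2$ identification. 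One small caution: your blanket appeal to ``invariance of modulation spaces under metaplectic/linear actions'' is false for general metaplectic operators when $p\neq q$; it is only the specific upper-triangular shear here that preserves the mixed norm, which your argument does use correctly.
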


\par

For the proofs we need the following extensions of Propositions 4.1
and 4.8 in \cite{To8}.

\par

\begin{prop}\label{PropA.4}
Let $A\in \GL (d,\mathbf R)$, and let $p_j,q_j,p,q\in (0,\infty ]$ be such that
$p\le p_j,q_j\le q$, for $j=1,2$, and 
\begin{equation}\label{Eq:(A.8)}
\frac 1{p_1} + \frac 1{p_2} = \frac 1{q_1}+\frac 1{q_2}=\frac 1p+\frac 1q.
\end{equation}
Also let $\omega _1,\omega _2\in \mascP _E(\rr {2d})$ and
$\omega \in \mascP _E(\rr {2d}\oplus \rr {2d})$ be such that
\begin{equation}\label{Eq:(A.9)}
\omega _0( x-A(x-y),A^*\xi +(I-A^*)\eta ,\xi -\eta ,y-x )
\lesssim
\omega _1(x,\xi )\omega _2(y,\eta ).
\end{equation}
Then the map $(f_1,f_2)\mapsto W_{f_1,f_2}^A$ from $\maclS _{1/2}'(\rr
d)\times \maclS _{1/2}'(\rr d)$ to $\maclS _{1/2}'(\rr {2d})$ restricts to a
continuous mapping from $M^{p_1,q_1}_{(\omega _1)}(\rr d)\times
M^{p_2,q_2}_{(\omega _2)}(\rr d)$ to $M^{p,q}_{(\omega _0
)}(\rr {2d})$, and
\begin{equation}\label{Eq:(A.10)}
\nm {W_{f_1,f_2}^A}{M^{p,q}_{(\omega _0)}}
\lesssim
\nm {f_1}{M^{p_1,q_1}_{(\omega _1)}} \nm
{f_2}{M^{p_2,q_2}_{(\omega _2)}}
\end{equation}
when $f_1,f_2\in \maclS _{1/2}'(\rr d)$.
\end{prop}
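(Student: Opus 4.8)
The plan is to reduce the general $A$-Wigner estimate to the already-known case $A=\tfrac12 I$ via the short-time Fourier transform, exactly in the spirit of the proof of Proposition~4.1 in \cite{To8}. The key is that $W_{f_1,f_2}^A$ is, up to the usual affine change of variables recorded in \eqref{WigSTFTrelation} (and its $A$-analogue), essentially a short-time Fourier transform of $f_1$ with window $f_2$. So I would begin by writing down, for fixed $\Phi_1,\Phi_2\in\Sigma_1(\rr d)\setminus 0$, an explicit formula for $V_\Phi\big(W_{f_1,f_2}^A\big)(X,\Xi)$ with $\Phi=W_{\Phi_1,\Phi_2}^A$, in terms of $V_{\Phi_1}f_1$ and $V_{\Phi_2}f_2$. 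The standard computation (see \cite{To8,Gc2}) shows that this quantity factors as a product
\begin{equation*}
\big|V_\Phi(W_{f_1,f_2}^A)(x,\xi,\zeta,z)\big|
=
\big|V_{\Phi_1}f_1(u_1,v_1)\big|\cdot\big|V_{\Phi_2}f_2(u_2,v_2)\big|,
\end{equation*}
where $(u_1,v_1,u_2,v_2)$ is an invertible linear function of $(x,\xi,\zeta,z)$ depending on $A$. This is where the matrix $A$ enters, and the precise form of the substitution is dictated by the $A$-dependent affine arguments already appearing in \eqref{wignertdef} and in \eqref{Eq:(A.3)}.

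Once this pointwise factorization is in hand, the estimate \eqref{Eq:(A.10)} becomes a statement about mixed-norm Lebesgue spaces: I must bound the $L^{p,q}_{(\omega_0)}$-norm of a product of two functions, each of which, after the linear change of variables, is a weighted STFT of $f_j$. The weight condition \eqref{Eq:(A.9)} is precisely what is needed so that $\omega_0$ evaluated at the transformed point is dominated by $\omega_1(u_1,v_1)\omega_2(u_2,v_2)$; this lets me pull the weights inside. The Lebesgue-exponent relations \eqref{Eq:(A.8)} together with $p\le p_j,q_j\le q$ are exactly the hypotheses under which the relevant Young-type / Hölder-type inequality for mixed-norm spaces holds — this is the same combinatorial input as in Proposition~4.1 of \cite{To8}, and I would invoke that lemma (or reprove it by iterated Hölder and Minkowski in the various variable groups) to conclude
\begin{equation*}
\nm{W_{f_1,f_2}^A}{M^{p,q}_{(\omega_0)}}
\lesssim
\nm{f_1}{M^{p_1,q_1}_{(\omega_1)}}\nm{f_2}{M^{p_2,q_2}_{(\omega_2)}}
\end{equation*}
first for $f_1,f_2\in\Sigma_1(\rr d)$ and then, by density (Proposition~\ref{p1.4}\,(2)) and the fact that both sides are continuous in the relevant topologies, for general $f_j\in\maclS_{1/2}'(\rr d)$ with finite right-hand side.

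An alternative, and perhaps cleaner, route is to use the transfer identity \eqref{wignertransf}: since $e^{i\scal{AD_\xi}{D_x}}$ intertwines $W^A$ with $W=W^{1/2\cdot I}$ up to the operator $e^{i\scal{\frac12 D_\xi}{D_x}}$, and since $a\mapsto e^{i\scal{BD_\xi}{D_x}}a$ merely composes the symbol with a linear change of variables on the Fourier side, one can deduce the $A$-version from the Weyl version of the proposition (the case $A=\tfrac12 I$ being Proposition~4.1 in \cite{To8}), provided one checks that this change of variables maps $M^{p,q}_{(\omega_0)}$ boundedly to $M^{p,q}_{(\omega_0')}$ with $\omega_0'$ related to $\omega_0$ by the appropriate reparameterization — and that the weight hypothesis \eqref{Eq:(A.9)} for $A$ corresponds to the Weyl-case hypothesis for $\omega_0'$. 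I would present the first approach as the main argument since it is self-contained, and remark on the second.

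\textbf{Main obstacle.} The genuinely delicate point is bookkeeping: getting the linear substitution $(x,\xi,\zeta,z)\mapsto(u_1,v_1,u_2,v_2)$ exactly right, verifying it is invertible for every $A\in\GL(d,\mathbf R)$ (not merely for $A=tI$), and then checking that under this substitution the weight inequality \eqref{Eq:(A.9)} is the correct hypothesis and that the exponent grouping in \eqref{Eq:(A.8)} matches the order in which Hölder and Minkowski must be applied in the four variable blocks. None of the individual steps is hard, but the affine algebra with the matrix $A$ and its transpose $A^*$ is error-prone, and it is where the proof differs substantively from the scalar case $A=t\cdot I$ treated in \cite{To8}.
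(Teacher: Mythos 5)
Your main argument coincides with the paper's own proof: there one takes $\Phi =W^A_{\phi _1,\phi _2}$ and computes $V_\Phi (W^A_{f_1,f_2})$ explicitly (Lemma \ref{Lemma:STFTAWigner}), getting precisely the pointwise factorization into $V_{\phi _1}f_1$ and $V_{\phi _2}f_2$ at $A$-dependent affine arguments whose difference depends only on $(y,\eta )$, and then concludes with \eqref{Eq:(A.9)}$'$, Minkowski's inequality and Young's convolution inequality under \eqref{Eq:(A.8)}, exactly as you outline following Proposition 4.1 in \cite{To8}. The only small point is that the passage to general $f_j\in \maclS _{1/2}'(\rr d)$ is better justified by observing that the factorization identity itself holds for distributions (density of $\Sigma _1$ in $M^{p_j,q_j}_{(\omega _j)}$ may fail for infinite or quasi-Banach exponents), which is also how the paper handles it.
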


\par

\begin{prop}\label{PropA.5}
Let $p\in (0,\infty]$, $\omega _j\in \mascP _E(\rr {2d_j})$, $j=1,2$,
$\omega \in \mascP _E(\rr {2d_2+2d_1})$, and let $T$ be a
linear and continuous operator from $\maclS _{1/2}(\rr {d_1})$ to
$\maclS _{1/2}'(\rr {d_2})$ with distribution kernel
$K\in \maclS _{1/2}'(\rr {d_2+d_1})$. Then the following is true:
\begin{enumerate}
\item[(1)] if \eqref{Eq:(A.2)}$'$ holds, then $T\in \mathscr I_2(M^2_{(\omega
_1)}(\rr {d_1}),M^2_{(\omega _2)}(\rr {d_2}))$, if and only if $K\in M^2_{(\omega )}(\rr
{d_2+d_1})$, and then
\begin{equation}\label{Eq:(A.11)}
\nm T{\mathscr I_2} \asymp \nm K{M^2_{(\omega )}}\text ;
\end{equation}

\vrum

\item[(2)] if $d_1=d_2=d$, $A\in \GL (d,\mathbf R)$ and
$\omega _0\in \mascP _E(\rr {2d})$ satisfy
\eqref{Eq:(A.3)}$'$, $a\in \maclS _{1/2}'(\rr {2d})$ and $K=K_{a,A}$ is
given by \eqref{atkernel}, then $K\in M^p_{(\omega )}(\rr {2d})$
if and only if $a\in M^p_{(\omega _0)}(\rr {2d})$, and
$$
\nm K{M^p_{(\omega )}} \asymp \nm a{M^p_{(\omega _0)}}\text .
$$
\end{enumerate}
\end{prop}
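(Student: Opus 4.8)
The plan is to reduce everything to the Fourier analytic description of modulation space membership via the short-time Fourier transform, and to exploit the explicit change of variables linking $K_{a,A}$ to $a$. For part (1), I would first recall that a linear continuous operator $T$ from $M^2_{(\omega_1)}(\rr{d_1})$ to $M^2_{(\omega_2)}(\rr{d_2})$ is Hilbert--Schmidt if and only if, after the unitary identifications of these weighted $L^2$-based modulation spaces with ordinary $L^2$ spaces (twisted by the respective weights through the STFT), the kernel lies in the appropriate $L^2$ space; this is the weighted analogue of the classical Hilbert--Schmidt kernel characterization. Concretely, choosing a Gaussian window $\phi$ and forming $\Phi=\phi\otimes\phi$ as a window on $\rr{d_2+d_1}$, one has $V_\Phi K(x,y,\xi,\eta)$ expressible in terms of the STFT of $T$ acting on shifted Gaussians, and the $\mascI_2$ norm of $T\colon M^2_{(\omega_1)}\to M^2_{(\omega_2)}$ comes out as a weighted $L^2$ norm of $V_\Phi K$ with weight $\omega_2(x,\xi)/\omega_1(y,\eta)$ in the relevant slots. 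Condition \eqref{Eq:(A.2)}$'$ says precisely that this weight is equivalent to $\omega(x,y,\xi,-\eta)$, which is the weight governing $\|K\|_{M^2_{(\omega)}}$; hence the two norms are equivalent, giving \eqref{Eq:(A.11)}. This is exactly Theorem A.1 in \cite{To13} applied in the $L^2$-setting, so I would simply invoke it and record the weight bookkeeping.

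For part (2), the point is to transfer the STFT of $a$ to the STFT of $K=K_{a,A}$ using \eqref{atkernel}, namely $K_{a,A}(x,y)=(2\pi)^{-d/2}(\mascF_2^{-1}a)(x-A(x-y),x-y)$. I would write $K_{a,A}=C\circ \mascF_2^{-1}\,a$ where $C$ is the linear change of variables $F(x,y)\mapsto F(x-A(x-y),x-y)$ on $\rr{2d}$, both of which are homeomorphisms on $\maclS_{1/2}'(\rr{2d})$ as already noted after \eqref{homeoF2tmap}. The standard covariance properties of the STFT under linear invertible substitutions and under partial Fourier transforms then give that $|V_\Phi K_{a,A}|$ at a point $(X,\Xi)$ equals (up to a constant) $|V_{\Phi'} a|$ at a linearly transformed point, for a suitably transformed Gaussian window $\Phi'$ (still a nonzero element of $\maclS_{1/2}$, so admissible by Proposition \ref{p1.4}(1)). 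Tracking the affine substitution through both $C$ and $\mascF_2^{-1}$, the phase-space variable transformation one obtains is exactly the one appearing in \eqref{Eq:(A.3)}$'$: this is where the formula $\omega_0(x,\xi,\eta,y)\asymp\omega(x-Ay,x+(I-A)y,\xi+(I-A^*)\eta,-\xi+A^*\eta)$ was reverse-engineered from. Consequently $\||V_\Phi K_{a,A}|\omega\|_{L^p}\asymp\||V_{\Phi'}a|\omega_0\|_{L^p}$ for every $p\in(0,\infty]$, which is the claimed norm equivalence and in particular the stated equivalence of membership.

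The main obstacle, and the part deserving care rather than cleverness, is the explicit computation of how the STFT transforms under the composite map $C\circ\mascF_2^{-1}$ and, correspondingly, the verification that the resulting change of variables on the eight phase-space coordinates $(x,\xi,\eta,y)$ is precisely \eqref{Eq:(A.3)}$'$ / \eqref{Eq:(A.4)}$'$ — including the appearance of $A^*$ and $(I-A^*)$ in the right places and the sign on the last slot. Two bookkeeping points need attention: first, that applying a nonsingular linear substitution to a function rescales the STFT by a Jacobian constant and composes the window with the inverse transpose substitution, so that the Gaussian window stays admissible; second, that a partial Fourier transform swaps a position slot with a (signed) frequency slot in the STFT variables. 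Once these two transformations are composed correctly, the weight identities in the statement are satisfied by construction, and the $L^p$ (quasi-)norm equivalence follows because the underlying substitution on phase space is linear and invertible, hence measure-preserving up to a constant. I would present the $p<\infty$ and $p=\infty$ cases uniformly since only the $L^p$ change of variables is used, with no duality or density needed here.
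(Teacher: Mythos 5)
Your proposal is correct and follows essentially the same route as the paper: part (1) is delegated to the Hilbert--Schmidt kernel characterization from \cite{To13}, and part (2) is proved by transferring the short-time Fourier transform through the map $a\mapsto K_{a,A}$ with a correspondingly transformed window (the paper takes $\Phi(x,y)=(\mascF_2\Psi)(x-A(x-y),x-y)$ and computes $|V_\Phi K_{a,A}|\asymp |V_\Psi a|$ at the transformed phase-space point), then applying the $L^p_{(\omega)}$ quasi-norm and window-independence of modulation space quasi-norms. The only difference is bookkeeping of citations and that the paper writes the window relation explicitly rather than appealing to general STFT covariance, which is immaterial.
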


\par

We need some preparations for the proofs.
First we note that \eqref{Eq:(A.9)} is the same as
\begin{equation}\tag*{(\ref{Eq:(A.9)})$'$}
\omega _0(x,\xi ,\eta ,y)
\lesssim
\omega _1(x-Ay,\xi +(I-A^*)\eta )\omega _2(x+(I-A)y,\xi -A^*\eta ).
\end{equation}

\par

\begin{lemma}\label{Lemma:STFTAWigner}
Let $A\in \GL (d,\mathbf R)$, $s\ge \frac 12$ $f,g\in \maclS _s'(\rr d)$,
$\phi ,\psi \in \maclS _s(\rr d)$, and let $\Phi = W_{\phi ,\psi}^A$. Then
\begin{multline*}
(V_\Phi W_{f,g}^A)(x,\xi ,\eta ,y)
\\[1ex]
=
e^{-i\scal y\xi}
(V_\phi f)(x-Ay,\xi -(A^*-I)\eta ) \overline{(V_\psi g)(x-(A-I)y,\xi -A^*\eta )}
\end{multline*}
\end{lemma}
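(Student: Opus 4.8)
The plan is to compute the short-time Fourier transform $V_\Phi W^A_{f,g}$ directly from the definitions, reducing everything to a single oscillatory integral and then recognizing the result as a product of two short-time Fourier transforms. First I would recall that $W^A_{f,g}(u,v) = (2\pi)^{-d/2}\int f(u+As)\overline{g(u+(A-I)s)}e^{-i\scal sv}\,ds$ and likewise $\Phi(u,v) = W^A_{\phi,\psi}(u,v) = (2\pi)^{-d/2}\int \phi(u+At)\overline{\psi(u+(A-I)t)}e^{-i\scal tv}\,dt$, so that
\begin{equation*}
(V_\Phi W^A_{f,g})(x,\xi,\eta,y) = (2\pi)^{-d}\iint W^A_{f,g}(u,v)\overline{\Phi(u-x,v-\xi)}e^{-i(\scal u\eta + \scal vy)}\,du\,dv.
\end{equation*}
Since all objects are (for the moment) assumed in $\maclS_s$, these manipulations are justified, and the general case $f,g\in\maclS_s'$ follows by a density/continuity argument, using that $(\phi,\psi)\mapsto W^A_{\phi,\psi}$ is continuous on $\maclS_s$ and the STFT is continuous on $\maclS_s'$.

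The next step is to substitute the integral representations of $W^A_{f,g}$ and $\overline{\Phi}$, producing a multiple integral in the variables $u,v,s,t$. The $v$-integral is the key simplification: collecting the $v$-dependent factors gives $\int e^{-i\scal v{y + s - t}}\,dv = (2\pi)^d\delta(y+s-t)$, which lets me carry out the $t$-integration and set $t = s+y$. What remains is an integral over $u$ and $s$ of $f(u+As)\overline{g(u+(A-I)s)}\,\overline{\phi(u-x+A(s+y))}\,\psi(u-x+(A-I)(s+y))$ against an exponential whose phase I would collect carefully in the exponents of $s$ and $u$. The goal is to massage the four function arguments, via the substitution $u \mapsto u$ and $s \mapsto s$ (possibly shifting $u$), so that $f$ and $\phi$ pair up in a pattern of the form $f(u')\overline{\phi(u'-x')}$ integrated against $e^{-i\scal{u'}{\xi'}}$, i.e. a copy of $V_\phi f$ at a shifted point, and likewise $g,\psi$ combine into $\overline{V_\psi g}$ at another shifted point, with a leftover scalar phase factor $e^{-i\scal y\xi}$.

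I expect the main obstacle to be exactly this bookkeeping: matching the arguments $u+As$ and $u-x+A(s+y)$ appearing in $f$ and $\phi$ (their difference is $x - Ay$, independent of $s$ and $u$) and separately the arguments $u+(A-I)s$ and $u-x+(A-I)(s+y)$ in $g$ and $\psi$ (difference $x - (A-I)y = x+(I-A)y$), while simultaneously keeping the phase factors straight so that the frequency variables come out as $\xi - (A^*-I)\eta$ in the $f$-factor and $\xi - A^*\eta$ in the $g$-factor. Concretely, after the change of variables one should see the $u$-integral factor cleanly — for fixed $s$ it produces a partial STFT in one group of arguments — but the interplay with the remaining $s$-integral, which produces the STFT in the complementary group, requires choosing the right order of integration and the right linear substitution; a Fubini/oscillatory-integral regularization argument handles convergence. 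Once the substitution is chosen correctly, comparing with the defining formula $V_\phi f(X,\Xi) = (2\pi)^{-d/2}\int f(z)\overline{\phi(z-X)}e^{-i\scal z\Xi}\,dz$ for each pair finishes the computation, and one reads off the stated identity including the scalar $e^{-i\scal y\xi}$ prefactor.
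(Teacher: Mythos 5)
Your proposal is correct and follows essentially the same route as the paper's proof: insert the integral representations of $W_{f,g}^A$ and $\Phi$, integrate out the internal frequency variable to force $t=s+y$ (producing the factor $e^{-i\scal y\xi}$), and then decouple the remaining $(u,s)$-integral into two STFT factors. The ``right linear substitution'' you leave as bookkeeping is exactly the paper's change of variables $(z_1,z_2)=(u+As,\, u+(A-I)s)$, which has Jacobian of modulus one and splits the phase into $\scal{z_1}{\xi-(A^*-I)\eta}-\scal{z_2}{\xi-A^*\eta}$, so the double integral factors into $(V_\phi f)(x-Ay,\xi-(A^*-I)\eta)\,\overline{(V_\psi g)(x-(A-I)y,\xi-A^*\eta)}$ as stated.
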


\par

The proof of the preceding lemma follows by similar arguments as for
Lemma 14.5.1 in \cite{Gc2}. In order to be self-contained, we here present
the arguments.

\par

\begin{proof}
Again we consider the case when $f,g\in \maclS _s(\rr d)$, leaving
the modifications of the general case to the reader.

\par

Let
\begin{multline*}
H(x,y_1,y_2,y_3)
\\[1ex]
=
f(y_1+Ay_2)\overline{g(y_1+By_2)}
\overline {\phi (y_1-x+Ay_3)}\psi (y_1-x+By_3),
\end{multline*}
where $B=A-I$. Then Fourier's inversion formula gives
\begin{multline*}
(2\pi )^d(V_\Phi W_{f,g}^A)(x,\xi ,\eta ,y) 
\\[1ex]
=
(2\pi )^{-d}\iiiint H(x,y_1,y_2,y_3)
e^{-i(\scal {y_2+y}{\eta _1} +\scal {y_3}{\xi -\eta _1} +\scal {y_1}\eta )}
\, dy_1dy_2dy_3d\eta _1
\\[1ex]
=
e^{-i\scal y\xi} \iint H(x,y_1,y_2,y_2+y)
e^{-i(\scal{y_2}\xi +\scal {y_1}\eta )}\, dy_1dy_2 
\\[1ex]
= e^{-i\scal y\xi} \iint F(y_1+Ay_2,x-Ay)
\overline{G(y_1+By_2,x-By)}
e^{-i(\scal{y_2}\xi +\scal {y_1}\eta )}\, dy_1dy_2  ,
\end{multline*}
where
\begin{align*}
F(x,y) &= f(x)\overline{\phi (x-y)}
\intertext{and}
G(x,y) &= g(x)\overline{\psi (x-y)} .
\end{align*}

\par

By taking $(y_1+Ay_2,y_1+By_2)$ as new variables of integration, we
obtain
\begin{multline*}
(2\pi )^d(V_\Phi W_{f,g}^A)(x,\xi ,\eta ,y) 
\\[1ex]
=
e^{-i\scal y\xi} \iint F(z_1,x-Ay)
\overline{G(z_2,x-By)}
e^{-i(\scal {z_1-z_2}\xi +
\scal {z_1-A(z_1-z_2)}\eta)} \, dz_1dz_2
\\[1ex]
=
(2\pi )^d e^{-i\scal y\xi} I(x,\xi ,\eta ,y)J(x,\xi ,\eta ,y),
\end{multline*}
where
\begin{multline*}
I(x,\xi ,\eta ,y) = (2\pi )^{-\frac d2} \int 
f(z)\overline{\phi (z-(x-Ay))}e^{-i\scal z{\xi -B^*\eta}}\, dz 
\\[1ex]
= V_\phi f)(x-Ay,\xi -B^*\eta )
\end{multline*}
and
\begin{multline*}
J(x,\xi ,\eta ,y) = (2\pi )^{-\frac d2} \int 
\overline{g(z)}{\psi (z-(x-By))}e^{i\scal z{\xi -A^*\eta}}\, dz
\\[1ex]
= \overline{(V_\psi g)(x-By,\xi -A^*\eta )},
\end{multline*}
and the result follows by combining these equalities.
%
\end{proof}

\par

\begin{proof}[Proof of Proposition \ref{PropA.4}]
We only prove the result when $p,q<\infty$. The
straight-forward modifications to the cases $p=\infty$ or
$q=\infty$ are left for the reader. Let $\phi _1,\phi _2\in \Sigma
_1(\rr d)\setminus 0$, and let $\Phi =W_{\phi _1,\phi _2}^A$. Then
Fourier's inversion formula gives
\begin{multline*}
(V_\Phi (W_{f_1,f_2}^A))(x,\xi ,\eta ,y)
\\[1ex]
= e^{-i\scal y\xi }F_1(x-Ay,\xi
+(I-A^*)\eta )\overline{F_2(x+(I-A)y,\xi -A^*\eta )},
\end{multline*}
where $F_j=V_{\phi _j}f_j$, $j=1,2$. By applying the $L^{p,q}_{(\omega )}$-norm
on the latter equality, and using \eqref{Eq:(A.9)}$'$, it follows from
Minkowski's inequality that
$$
\nm {W_{f_1,f_2}^A}{M^{p,q}_{(\omega _0)}} \lesssim \big (\nm
{G_1*G_2}{L^{r}}\big )^{1/p} \le \Big (\int H(\eta )\, d\eta \Big
)^{1/q},
$$
where $G_j=(F_j\omega _j)^p$, $r=q/p\ge 1$ and
$$
H(\eta ) = \Big (\int \Big (\int \Big (\int G_1(y-x,\eta -\xi
)G_2(x,\xi )\, dx\Big )^{r}\, dy\Big )^{1/r}\, d\xi \Big )^{r}.
$$
Now let $r_j,s_j\in [1,\infty ]$ for $j=1,2$ be chosen such that
$$
\frac 1{r_1}+\frac 1{r_2} = \frac 1{s_1}+\frac 1{s_2}=1+\frac 1r .
$$
Then Young's inequality gives
\begin{equation*}
H(\eta )\le \Big ( \int \nm {G_1(\cdo ,\eta -\xi )}{L^{r_1}}\nm
{G_2(\cdo ,\xi )}{L^{r_2}}\, d\xi \Big )^r
\end{equation*}
Hence another application of Young's inequality gives
$$
\nm {W_{f_1,f_2}^A}{M^{p,q}_{(\omega _0)}} \lesssim \Big (\int
H(\eta )\, d\eta \Big )^{1/q}\lesssim \big (\nm {G_1}{L^{r_1,s_1}}\nm
{G_2}{L^{r_2,s_2}}\big )^{1/p}
$$
By letting $p_j=pr_j$ and $q_j=qs_j$, the last inequality gives \eqref{Eq:(A.10)}.
The proof is complete.
\end{proof}

\par

\begin{proof}[Proof of Proposition \ref{PropA.5}]
We only prove (2), since (1) is a restatement of Proposition A.5 (2) in
\cite{To13}.

\par

Let $\Phi ,\Psi \in \maclS _{1/2}(\rr {2d})\setminus 0$ be such that
$$
\Phi (x,y) =(\mathscr F_2\Psi )(x-A(x-y),x-y).
$$
Then it follows by straight-forward applications of Fourier's inversion formula
that
$$
|(V_\Phi K_{a,A})(x,y,\xi ,\eta )|
\asymp
|(V_\Psi a)(x-A(x-y),A^*\xi +(A^*-I)\eta ,\xi +\eta ,y-x)|.
$$
The assertion now follows by applying the
$L^p _{(\omega )}$ quasi-norm on the last equality, and using the fact that 
modulation spaces are independent of the choice of window functions 
in the definition of the modulation space quasi-norm (cf. Propositions 3.1 and
3.4 in \cite{To14}).
%
%
%
\end{proof}

\par

\begin{proof}[Proof of Theorem \ref{ThmA.1}]
The equivalence between (1) and (2) follows from Theorem A.1 in \cite{To13},
and the equivalence between (2) and (3) follows immediately from Proposition
\ref{PropA.5}.
\end{proof}

\par

\begin{proof}[Proof of Theorem \ref{ThmA.2}]
The conditions on $p_j$ and $q_j$ implies that
$$
p'\le p_1,q_1,p'_2,q'_2\le q',\quad
\frac 1{p_1}+\frac 1{p'_2}=\frac 1{q_1}+\frac 1{q'_2}=\frac 1{p'}+\frac 1{q'}.
$$
Hence Proposition \ref{PropA.4}, and \eqref{Eq:(A.4)} show that
$$
\nm {W^A_{g, f}}{M^{p',q'}_{(1/\omega )}}\lesssim
\nm f{M^{p_1,q_1}_{(\omega _1)}}\nm g{M^{p'_2,q'_2}_{(1/\omega _2)}}
$$
when $f\in M^{p_1,q_1}_{(\omega _1)}(\rr d)$ and $g\in
M^{p'_2,q'_2}_{(1/\omega _2)}(\rr d)$.

\par

The continuity is now an immediate consequence of \eqref{tWigpseudolink}
and Proposition \ref{p1.4} (4), except for the case $p=q'=\infty$,
which we need to consider separately.

\par

Therefore assume that $p=\infty$, and $q=1$, and let $a\in
\maclS _{1/2}(\rr {2d})$. Then $p_1=p_2$ and $q_1=q_2$, and it follows
from Proposition \ref{PropA.4} and the first part of the proof that
$W_{g,f}\in M^{1,\infty}_{(1/\omega _0)}$, and that \eqref{Eq:(A.7)} holds.
In particular,
$$
|(\op _A(a)f,g))|\lesssim \nm f{M^{p_1,q_1}_{(\omega _1)}}
\nm g{M^{p_1',q_1'}_{(1/\omega _2)}},
$$
and the result follows when $a\in \maclS _{1/2}$. The result now
follows for general $a\in M^{\infty ,1}_{(\omega _0)}$, by taking a
sequence $\{ a_j \} _{j\ge 1}$ in $\maclS _{1/2}$, which converges
narrowly to  $a$. (For narrow convergence see Theorems 4.15 and
4.19, and Proposition 4.16 in \cite{To11}).

\par

It remains to prove that if $a$ belongs to the closure of $\maclS _{1/2}$ under
$M^{p,q}_{(\omega )}$ norm, then $\op _A(a)\, :\, M^{p_1,q_1}_{(\omega
_1)}\to M^{p_2,q_2}_{(\omega _2)}$ is compact. As a consequence of
Theorem \ref{ThmA.3}, it follows that $\op _A(a_0)$ is compact when $a_0\in
\maclS _{1/2}$, since $\maclS _{1/2}\hookrightarrow M^1_{(\omega _0)}$
when $\omega _0\in \mascP _E$, and that every trace-class operator
is compact. The compactness of $\op _A(a)$ now follows by approximating
$a$ with elements in $\maclS _{1/2}$. The proof is complete.
\end{proof}

\par

For the proof of Theorem \ref{ThmA.3} we need the following extension of
Theorem 4.12 in \cite{To13}.

\par

\begin{prop}\label{Prop:SchattenIdenti}
Let $A\in \GL (d,\mathbf R)$, $p\in [1,\infty )$ and that $\mathscr H
_1,\mathscr H_2$ are tempered Hilbert spaces on $\rr d$. Then the
$L^2$ form on $\mathscr S(\rr {2d})$ extends
uniquely to a duality between $s_{A,p}(\mathscr H_1,\mathscr H_2)$ and
$s_{A,p'}(\mathscr H_1',\mathscr H_2')$, and the dual space for
$s_{A,p}(\mathscr H_1,\mathscr H_2)$ can be identified with
$s_{A,p'}(\mathscr H_1',\mathscr H_2')$ through this form. Moreover, if
$\ell \in s_{A,p}(\mathscr H_1,\mathscr H_2)^*$ and $a\in
s_{A,p'}(\mathscr H_1',\mathscr H_2')$ are such that $\overline{\ell
(b)}=(a,b)_{L^2}$ when $b\in s_{A,p}(\mathscr H_1,\mathscr H_2)$, then
$$
\nmm \ell = \nm a{s_{A,p'}(\mathscr H_1',\mathscr H_2')}.
$$
\end{prop}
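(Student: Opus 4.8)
The plan is to reduce Proposition~\ref{Prop:SchattenIdenti} to the abstract Schatten--von Neumann duality of Proposition~\ref{Prop:SchattenDual}, applied to $\mascI _p(\mascH _1,\mascH _2)$, by transporting the whole statement through the isometric bijection $a\mapsto \op _A(a)$ from $s_{A,p}(\mascH _1,\mascH _2)$ onto $\mascI _p(\mascH _1,\mascH _2)$ recorded after \eqref{SchattNormId}. The real work is then only to verify that, under this transport, the $L^2$ form on symbols becomes the trace form \eqref{Eq:SchattenScalar}. Throughout I would rely on the structure theory of tempered Hilbert spaces from \cite{To13}: the $L^2$-duals $\mascH _1',\mascH _2'$ are again tempered with duals $\mascH _1,\mascH _2$; $\maclS _{1/2}(\rr d)$ and $\Sigma _1(\rr d)$ are dense in each of these four spaces; and the $L^2$-adjoint $T\mapsto T^*$ is an isometric isomorphism of $\mascI _r(\mascX ,\mascY )$ onto $\mascI _r(\mascY ',\mascX ')$ for tempered $\mascX ,\mascY$ and every $r\in [1,\infty ]$. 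I would also record the elementary identity $\op _A(a)^*=\op _{I-A}(\overline a)$ (with $*$ the $L^2$-adjoint), which follows from \eqref{tWigpseudolink} together with $\overline{W^A_{g,f}}=W^{I-A}_{f,g}$; in particular $a\mapsto \op _A(a)^*$ is an isometry of $s_{A,r}(\mascH _1',\mascH _2')$ onto $\mascI _r(\mascH _2,\mascH _1)$.

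The key step is the pairing identity. For a rank-one symbol $b=(2\pi )^{d/2}W^A_{g,f}$ with $f,g\in \maclS _{1/2}(\rr d)$, the operator $\op _A(b)$ equals $h\mapsto (h,f)_{L^2}g$ by \eqref{trankone} and \eqref{wignertdef}; hence for any $a\in \maclS _{1/2}(\rr {2d})$, computing a rank-one trace and using \eqref{tWigpseudolink} gives
\begin{equation}\label{Eq:SIpair}
\operatorname{Tr}_{\mascH _1}\!\big (\op _A(a)^*\circ \op _A(b)\big ) =\overline{(\op _A(a)f,g)_{L^2}}=\overline{(a,b)_{L^2(\rr {2d})}},
\end{equation}
where $\op _A(a)^*\colon \mascH _2\to \mascH _1$ is the $L^2$-adjoint, and the $L^2$ form on symbols is normalized as in \cite{To13}. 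The left-hand side of \eqref{Eq:SIpair} is exactly an instance of the trace form \eqref{Eq:SchattenScalar} pairing $\op _A(b)\in \mascI _p(\mascH _1,\mascH _2)$ with $\op _A(a)^*\in \mascI _{p'}(\mascH _2,\mascH _1)$. Since finite-rank operators are dense in $\mascI _p(\mascH _1,\mascH _2)$ for $p<\infty$ and both sides of \eqref{Eq:SIpair} depend continuously on $\op _A(b)$ in the $\mascI _p$-quasinorm (by H{\"o}lder's inequality \eqref{SchattenComp}), the identity persists for every $b\in s_{A,p}(\mascH _1,\mascH _2)$; combined with the continuity of the trace form in $\op _A(a)^*\in \mascI _{p'}(\mascH _2,\mascH _1)$, this shows that the $L^2$ form extends to a bounded sesquilinear form on $s_{A,p'}(\mascH _1',\mascH _2')\times s_{A,p}(\mascH _1,\mascH _2)$, consistently with the $L^2$ pairing of Proposition~\ref{p1.4}(3).

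It then remains to assemble the pieces. From \eqref{Eq:SIpair}, H{\"o}lder's inequality \eqref{SchattenComp} and the identity $\nm {\op _A(a)^*}{\mascI _{p'}(\mascH _2,\mascH _1)}=\nm a{s_{A,p'}(\mascH _1',\mascH _2')}$, each $a\in s_{A,p'}(\mascH _1',\mascH _2')$ defines a functional $\ell _a\in s_{A,p}(\mascH _1,\mascH _2)^*$ with $\nmm {\ell _a}\le \nm a{s_{A,p'}(\mascH _1',\mascH _2')}$. Conversely, given $\ell \in s_{A,p}(\mascH _1,\mascH _2)^*$, I would pull it back through $a\mapsto \op _A(a)$ and use Proposition~\ref{Prop:SchattenDual}(2) to obtain a unique $S\in \mascI _{p'}(\mascH _2,\mascH _1)$ with $\ell (T)=\operatorname{Tr}_{\mascH _1}(S\circ T)$ for $T\in \mascI _p(\mascH _1,\mascH _2)$ and $\nm S{\mascI _{p'}(\mascH _2,\mascH _1)}=\nmm \ell$; letting $a\in \maclS _{1/2}'(\rr {2d})$ be the unique symbol with $\op _A(a)^*=S$, the $L^2$-adjoint isometry gives $a\in s_{A,p'}(\mascH _1',\mascH _2')$ with $\nm a{s_{A,p'}(\mascH _1',\mascH _2')}=\nmm \ell$, while \eqref{Eq:SIpair} yields $\overline{\ell (b)}=(a,b)_{L^2}$ for every $b\in s_{A,p}(\mascH _1,\mascH _2)$. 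Combining the two estimates gives the asserted identification and norm equality.

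I do not expect the matrix parameter $A$ to be the obstacle: it enters only through the already available relations \eqref{trankone}, \eqref{tWigpseudolink} and $\op _A(a)^*=\op _{I-A}(\overline a)$, so once these are recorded the argument runs exactly as the proof of Theorem~4.12 in \cite{To13}. The delicate points are rather the bookkeeping of the two distinct adjoints --- the $L^2$-adjoint versus the Hilbert space adjoint --- entering at different stages, and a clean use, in precisely the generality needed here, of the tempered-Hilbert-space facts quoted above (density of $\maclS _{1/2}$, the $L^2$-duality $\mascH \leftrightarrow \mascH '$, and the isometry of the $L^2$-adjoint on the Schatten classes), including at the non-reflexive endpoint $p=1$.
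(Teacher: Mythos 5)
Your proposal is correct in substance, but it takes a genuinely different route from the paper. The paper's proof is a two-line transfer argument: it quotes Theorem 4.12 of \cite{To13} for the case $A=0$, and then passes to general $A$ by Proposition \ref{Prop:CalculiTransfer} together with the unitarity of $e^{i\scal {AD_\xi}{D_x}}$ on $L^2(\rr {2d})$, so that $a\mapsto e^{i\scal {AD_\xi}{D_x}}a$ is a symbol-side isometry of $s_{A,p}(\mascH _1,\mascH _2)$ onto $s_{0,p}(\mascH _1,\mascH _2)$ which at the same time preserves the $L^2$ pairing; nothing operator-theoretic has to be redone. You instead re-derive the duality directly for general $A$ from the abstract Schatten duality of Proposition \ref{Prop:SchattenDual}, transported through the isometry $a\mapsto \op _A(a)$, using the adjoint formula $\op _A(a)^*=\op _{I-A}(\overline a)$ and the rank-one/trace computation with $A$-Wigner distributions --- in effect reproving the $A=0$ theorem of \cite{To13} with the matrix parameter carried along. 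The paper's route buys brevity and delegates all the tempered-Hilbert-space work to \cite{To13}; your route buys a self-contained argument (only the operator-theoretic duality and the structural facts on tempered Hilbert spaces are quoted) and makes explicit the identity $\op _A(a)^*=\op _{I-A}(\overline a)$, which you verify correctly via $\overline {W^A_{g,f}}=W^{I-A}_{f,g}$ and which is useful beyond this proposition. Two points should be tightened. First, with the normalizations actually used in this paper, \eqref{trankone} and \eqref{tWigpseudolink} give $\operatorname{Tr}_{\mascH _1}\big (\op _A(a)^*\circ \op _A(b)\big )=(2\pi )^{-d}\, \overline {(a,b)_{L^2}}$ for $b=(2\pi )^{d/2}W^A_{g,f}$, so the constant you defer to ``the normalization of \cite{To13}'' must be pinned down consistently if the exact equality $\nmm \ell =\nm a{s_{A,p'}(\mascH _1',\mascH _2')}$ is to come out as stated; this is bookkeeping, not a structural flaw, but it is precisely the kind of constant that the paper's transfer argument never has to touch. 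Second, in your density step the trace form is not an a priori defined quantity agreeing with an a priori defined $L^2$ form on all of $s_{A,p}$: what you actually do is define the extension of the $L^2$ form by the trace form and then check it is the unique continuous extension from the span of the rank-one symbols $W^A_{g,f}$, $f,g\in \maclS _{1/2}$, which is dense in $s_{A,p}(\mascH _1,\mascH _2)$ for $p<\infty$ because $\maclS _{1/2}$ is dense in $\mascH _1'$ and $\mascH _2$; phrasing it this way removes any circularity.
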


\par

\begin{proof}
The result follows from Theorem 4.12 in the case $A=0$. For general
$A$, the result now follows from Proposition \ref{Prop:CalculiTransfer}
and the fact that $e^{i\scal {AD_\xi}{D_x}}$ is unitary on $L^2(\rr {2d})$.
\end{proof}

\par

\begin{proof}[Proof of Theorem \ref{ThmA.3}]
The first inclusion in
$$
M^{\infty ,1}_{(\omega _0)} \hookrightarrow s_{A,\infty}(\omega _1,\omega _2)
\hookrightarrow M^\infty _{(\omega _0)}
$$
follows from Theorem \ref{ThmA.2}, and the second one
from Proposition \ref{p1.4} (2) and Theorem \ref{ThmA.1}.

\par

By Propositions \ref{p1.4} (3), \ref{Prop:SchattenDual} and
\ref{Prop:SchattenIdenti}, \eqref{SchattNormId}, and duality,
the latter inclusions give
$$
M^{1}_{(\omega _0)} \hookrightarrow s_{A,1}(\omega _1,\omega _2)
\hookrightarrow M^{1,\infty} _{(\omega _0)},
$$
and we have proved the result when $p=1$ and when $p=\infty$.
Furthermore, by Proposition \ref{PropA.5} we have
$M^{2}_{(\omega _0)} = s_{A,2}(\omega _1,\omega _2)$, and
the result also holds in the case $p=2$. The result now follows
for general $p$ from these cases and interpolation. (See e.{\,}g.
Proposition 5.8 in \cite{To11}.) The proof
is complete.
\end{proof}

\par

The next result shows that the operator $e^{i\scal{AD_\xi}{D_x}}$ is bijective
between suitable modulation spaces. (See also \cite{To5,To9,To13} for similar results in
restricted cases.)

\par

\begin{prop}\label{Prop:ExpOpSTFT}
Let $s\ge \frac 12$, $A\in \GL (d,\mathbf R)$, $p,q\in (0,\infty ]$,
$\phi ,a\in \maclS _s(\rr {2d})$ and let $T_A = e^{i\scal{AD_\xi}{D_x}}$.
Then
\begin{equation}\label{Eq:ExpOpSTFT}
(V_{T_A\phi}(T_Aa)) (x,\xi ,\eta ,y) = e^{i\scal {Ay}\eta}
(V_\phi a)(x+Ay,\xi +A^*\eta ,\eta ,y).
\end{equation}
Furthermore, if $\omega \in \mascP _E(\rr {4d})$ and
$$
\omega _A(x,\xi ,\eta ,y) = \omega (x+Ay,\xi +A^*\eta ,\eta ,y),
$$
then $T_A$ from $\maclS _s(\rr {2d})$ to $\maclS _s(\rr {2d})$
extends uniquely to a homeomorphism from $M^{p,q}_{(\omega )}(\rr {2d})$
to $M^{p,q}_{(\omega _A)}(\rr {2d})$, and
\begin{equation}\label{Eq:ExpOpModSp}
\nm {T_Aa}{M^{p,q}_{(\omega _A)}} \asymp
\nm a{M^{p,q}_{(\omega )}}.
\end{equation}
\end{prop}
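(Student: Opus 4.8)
The plan is to establish the operator identity \eqref{Eq:ExpOpSTFT} first by a direct computation on the Gelfand--Shilov level, and then to transfer the consequences to modulation spaces via the usual STFT characterization of the $M^{p,q}_{(\omega)}$ quasi-norms. For the first part I would work with $a,\phi\in\maclS_s(\rr{2d})$ and use the fact that $T_A=e^{i\scal{AD_\xi}{D_x}}$ acts on the (partial) Fourier transform side as multiplication by $e^{i\scal{Ax}{\xi}}$, which we already exploited after Proposition~\ref{Prop:CalculiTransfer}: if $\widehat a$ denotes a Fourier transform of $a$ in suitable variables, then $\widehat{T_Aa}$ differs from $\widehat a$ only by such a unimodular factor. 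Plugging this into the defining integral $V_{T_A\phi}(T_Aa)(x,\xi,\eta,y)=(2\pi)^{-d}\int (T_Aa)(z,w)\overline{(T_A\phi)(z-x,w-\eta)}e^{-i(\scal z\xi+\scal w y)}\,dzdw$ and then performing a change of variables of the form $(z,w)\mapsto(z+Ay,w)$ or similar (dictated by precisely which slot the exponential acts in) should, after Fourier inversion, collapse the two unimodular factors to the single prefactor $e^{i\scal{Ay}{\eta}}$ together with the argument shift $(x+Ay,\xi+A^*\eta,\eta,y)$ on the right-hand side. This is a computation in the same spirit as Lemma~\ref{Lemma:STFTAWigner} and the proof of Proposition~\ref{PropA.5}, so I would phrase it as a straightforward application of Fourier's inversion formula and leave the bookkeeping of variables largely to the reader.

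Once \eqref{Eq:ExpOpSTFT} is in hand, the modulation space statement is essentially a change-of-variables argument on the STFT side. The point is that $|V_{T_A\phi}(T_Aa)(x,\xi,\eta,y)|=|(V_\phi a)(x+Ay,\xi+A^*\eta,\eta,y)|$, so applying the weighted mixed-Lebesgue quasi-norm to the left-hand side, weighting by $\omega_A$, and substituting $(x,\xi,\eta,y)\mapsto(x-Ay,\xi-A^*\eta,\eta,y)$ (which has Jacobian one and is affine, so it preserves all the relevant measurability and the mixed $L^{p,q}$ structure in the $(x,\xi)$ against $(\eta,y)$ ordering) turns the $M^{p,q}_{(\omega_A)}$ quasi-norm of $T_Aa$ into the $M^{p,q}_{(\omega)}$ quasi-norm of $a$, by the very definition $\omega_A(x,\xi,\eta,y)=\omega(x+Ay,\xi+A^*\eta,\eta,y)$. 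This gives \eqref{Eq:ExpOpModSp} with constants depending only on the window, for $a\in\maclS_s(\rr{2d})$, and by density (Proposition~\ref{p1.4}(2), since $\Sigma_1\hookrightarrow M^{p,q}_{(\omega)}$) plus the window-independence of the modulation quasi-norm (Proposition~\ref{p1.4}(1)) it extends to a homeomorphism $M^{p,q}_{(\omega)}\to M^{p,q}_{(\omega_A)}$. One should check that $\omega_A\in\mascP_E(\rr{4d})$, i.e. that it is again moderate; this is immediate because $(x,\xi,\eta,y)\mapsto(x+Ay,\xi+A^*\eta,\eta,y)$ is a linear bijection and moderateness is preserved under composition with linear bijections (the controlling submultiplicative weight simply gets replaced by its pullback, which is again submultiplicative).

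The main obstacle, such as it is, lies entirely in the first step: getting the change of variables in the STFT integral exactly right so that the four shifted arguments and the unimodular prefactor $e^{i\scal{Ay}{\eta}}$ come out precisely as stated, rather than with $A$ replaced by $A^*$ somewhere or with an extra phase. I would handle this carefully by writing $T_Aa$ via its Fourier representation, $T_Aa(z,w)=(2\pi)^{-d}\iint \widehat a_1(\zeta,w)e^{i(\scal z\zeta + \scal{A^*\zeta}{w})}\,d\zeta\,\cdots$ in the appropriate partial transform, and similarly for $T_A\phi$, then interchanging integrations and using Fourier inversion to integrate out the auxiliary frequency variables; the stationary bookkeeping then forces the asserted shifts. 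After that, everything is routine: the rest is the density/window-independence boilerplate that appears in the same form in the proofs of Theorems~\ref{ThmA.2} and \ref{ThmA.3} above.
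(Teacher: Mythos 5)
Your proposal follows essentially the same route as the paper: the identity \eqref{Eq:ExpOpSTFT} is obtained by a direct Fourier-inversion computation (the paper simply cites the analogous computation in \cite{To6}), and \eqref{Eq:ExpOpModSp} by choosing a fixed window, multiplying the identity by $\omega _A$ and applying the mixed $L^{p,q}$ quasi-norm, with window independence of the modulation quasi-norm finishing the argument. One caveat: for $p=\infty$, $q=\infty$ or exponents below $1$ your appeal to norm-density of $\maclS _s$ is not available (density is Proposition \ref{p1.4}{\,}(4) and requires $1\le p,q<\infty$); the cleaner reading, implicit in the paper, is that $T_A$ and \eqref{Eq:ExpOpSTFT} already make sense on $\maclS _s'$, so the norm equivalence holds directly for every $a$ in the relevant modulation space and no density argument is needed.
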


\par

\begin{proof}
The formula \eqref{Eq:ExpOpSTFT} follows by a straight-forward
application of Fourier's inversion formula (cf. \cite[Proposition 1.5]{To6}
and its proof). The estimate \eqref{Eq:ExpOpModSp} then follows
by first choosing $\phi$ in $\Sigma _1(\rr {2d})\setminus 0$ in
\eqref{Eq:ExpOpSTFT}, then multiplying this equation by
$\omega _A$ and thereafter applying the mixed $L^{p,q}$ quasi-norm.
This gives the result.
\end{proof}

\par

\subsection{Composition properties}

\par

Let $s\ge \frac 12$, $A\in \GL (d,\mathbf R)$ and
$a_1,\dots ,a_N\in \maclS _s(\rr {2d})$. Then the $N$-linear
product
\begin{equation}\label{Eq:N-LinPsProd}
(a_1,\dots ,a_N)\mapsto a_1\wpr _A \cdots \wpr _A a_N
\end{equation}
is defined by the formula
$$
\op _A(a_1)\circ \cdots \circ \op _A(a_N)
=
\op _A(a_1\wpr _A \cdots \wpr _A a_N).
$$
The $N$-linear product \eqref{Eq:N-LinPsProd} extends in different ways.
Here below we give examples on extensions in the framework of
modulation space theory.

\par

By a straight-forward application of Proposition
\ref{Prop:CalculiTransfer} we have
%
\begin{multline}\label{Eq:CalculiTransfComp}
a_1\wpr _A \cdots \wpr _A a_N
\\[1ex]
=
T_{A-B}^{\ -1}\big (
(T_{A-B}a_1)\wpr _B \cdots \wpr _B
(T_{A-B}a_N) \big ),\quad
T_{A}\equiv e^{i\scal {AD_\xi}{D_x}},
\end{multline}
for $A,B\in \GL (d,\mathbf R)$ and suitable $a_1,\dots ,a_N$.

\par

We have now the following results on compositions. Here
it is assumed that the weight functions should obey
\begin{multline}\label{weightcondAcalc}
1 \lesssim \omega _0(T_A(X_N,X_0))\prod _{j=1}^N
\omega _j(T_A(X_{j},X_{j-1})),
\quad X_0,\dots ,X_N \in \rr {2d},
\end{multline}
where
\begin{multline}\label{Ttdef}
T_A(X,Y) =(y+A(x-y),\xi+A^*(\eta -\xi ) ,\eta -\xi , x-y),
\\[1ex]
X=(x,\xi )\in \rr {2d},\ Y=(y,\eta )\in \rr {2d}.
\end{multline}
As in \cite{CoToWa} we also let $\masfR _N(\mabfp )$ with
$\mabfp =(p_0,\dots ,p_N)\in [1,\infty ]^{N+1}$
be the H{\"o}lder-Young functional
\begin{equation}\label{HYfunctional}
\begin{aligned}
\masfR _N(\mabfp ) &= ({N-1})^{-1}\left ({\sum _{j=0}^N\frac
1{p_j}-1}\right ),
\\[1ex]
\mabfp  &= (p_0,p_1,\dots ,p_N)\in [1,\infty ]^{N+1}.
\end{aligned}
\end{equation}

\par

\begin{thm}\label{Thm:Alg1}
Let $s\ge \frac 12$, $A\in \GL (d,\mathbf R)$, $p_j,q_j\in [1,\infty ]$,
$j=0,1,\dots , N$, and suppose
\begin{equation}\label{pqconditions}
\max \left ( \masfR _N(\mabfq ') ,0 \right )
\le  \min _{j=0,1,\dots ,N} \left ( \frac 1{p_j},\frac 1{q_j'},
\masfR _N(\mabfp )\right ).
\end{equation}
Let $\omega _j \in \mascP _E(\rr {4d})$, $j=0,1,\dots ,N$, and suppose
\eqref{weightcondAcalc} holds. Then the map
\eqref{Eq:N-LinPsProd} from $\maclS _s(\rr {2d}) \times \cdots \times
\maclS _s(\rr {2d})$ to $\maclS _s(\rr {2d})$
extends uniquely to a continuous and associative map from $M ^{p_1,q_1}
_{(\omega _1)}(\rr {2d}) \times \cdots \times M ^{p_N,q_N}
_{(\omega _N)}(\rr {2d})$ to $M ^{p_0',q_0'} _{(1/\omega _0)}(\rr {2d})$.
\end{thm}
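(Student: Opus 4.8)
The plan is to reduce the general matrix case to the Weyl case ($B=\frac12 I$) already handled in the literature, using the transfer formula \eqref{Eq:CalculiTransfComp} together with the mapping properties of $T_A=e^{i\scal{AD_\xi}{D_x}}$ from Proposition \ref{Prop:ExpOpSTFT}. First I would fix $B=\frac12 I$ (or any convenient reference matrix for which the result is known, e.g.\ from \cite{CoToWa,CTW,HTW}) and write, for $a_1,\dots,a_N\in\maclS_s(\rr{2d})$,
\begin{equation*}
a_1\wpr_A\cdots\wpr_A a_N
= T_{A-B}^{-1}\bigl((T_{A-B}a_1)\wpr_B\cdots\wpr_B(T_{A-B}a_N)\bigr).
\end{equation*}
The strategy is then: (i) apply Proposition \ref{Prop:ExpOpSTFT} to see that $T_{A-B}$ maps $M^{p_j,q_j}_{(\omega_j)}(\rr{2d})$ isomorphically onto $M^{p_j,q_j}_{(\widetilde\omega_j)}(\rr{2d})$, where $\widetilde\omega_j=(\omega_j)_{A-B}$ is the twisted weight from that proposition; (ii) invoke the known Weyl-case composition theorem on the twisted symbols; (iii) pull back by $T_{A-B}^{-1}$ using Proposition \ref{Prop:ExpOpSTFT} again, now applied to the output space $M^{p_0',q_0'}_{(1/\omega_0)}$. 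The Lebesgue-exponent hypothesis \eqref{pqconditions} is invariant under this reduction since the exponents $p_j,q_j$ are untouched by $T_{A-B}$, so nothing needs to be checked there beyond citing the known $B$-case.

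The heart of the argument is the bookkeeping for the weights. I would verify that the $A$-weight condition \eqref{weightcondAcalc}, stated via the map $T_A$ in \eqref{Ttdef}, is transported by $T_{A-B}$ into exactly the $B$-weight condition for the twisted weights $\widetilde\omega_j$. Concretely, one checks the algebraic identity relating $T_A(X,Y)$, $T_B(X,Y)$, and the affine substitution $X\mapsto X+(A-B)$-type shift encoded in the definition $\omega_A(x,\xi,\eta,y)=\omega(x+Ay,\xi+A^*\eta,\eta,y)$: the composition of the reference $B$-chain condition with these shifts should reproduce \eqref{weightcondAcalc} up to the $\lesssim$ constants. This is a finite linear-algebra computation in the phase-space variables $X_0,\dots,X_N$, and writing it out carefully is the one genuinely non-routine step; once the weight condition matches, the continuity estimate
\begin{equation*}
\nm{a_1\wpr_A\cdots\wpr_A a_N}{M^{p_0',q_0'}_{(1/\omega_0)}}
\lesssim \prod_{j=1}^N \nm{a_j}{M^{p_j,q_j}_{(\omega_j)}}
\end{equation*}
follows by composing the three norm equivalences.

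For the extension from $\maclS_s$ to the full modulation spaces and for associativity, I would argue by density: $\maclS_s(\rr{2d})$ (or its closure, via narrow convergence as in \cite{To11}) is dense enough in each $M^{p_j,q_j}_{(\omega_j)}$ in the relevant topology, the estimate just obtained gives that the $N$-linear map is bounded, so it extends uniquely and continuously; associativity passes to the limit because it holds on $\maclS_s$ by the operator-composition definition $\op_A(a_1)\circ\cdots\circ\op_A(a_N)=\op_A(a_1\wpr_A\cdots\wpr_A a_N)$ and composition of operators is associative. The main obstacle I anticipate is purely computational: matching \eqref{weightcondAcalc}–\eqref{Ttdef} against the reference-case weight condition after the $T_{A-B}$ substitution, since the twist mixes the $(x,\xi,\eta,y)$ slots in a way that must be tracked consistently across all $N+1$ factors; by contrast, the exponent condition \eqref{pqconditions} and the density/associativity parts are essentially formal once the reference case is granted.
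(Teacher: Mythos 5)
Your proposal follows essentially the same route as the paper: the paper also establishes the Weyl case by citing Theorems 0.1$'$ and 2.9 in \cite{CoToWa} and then passes to general $A$ via \eqref{Eq:CalculiTransfComp} together with Proposition \ref{Prop:ExpOpSTFT}, exactly the reduction you describe. Your explicit attention to transporting the weight condition \eqref{weightcondAcalc} under $T_{A-B}$ is precisely the step the paper dismisses as a straightforward application, so the argument is correct and matches the intended proof.
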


\par

\begin{thm}\label{Thm:Alg2}
Let $s\ge \frac 12$, $A\in \GL (d,\mathbf R)$, $p_j,q_j\in [1,\infty ]$,
$j=0,1,\dots , N$, and suppose
\begin{equation}\label{pqconditions3}
\masfR _N(\mabfp ) \ge 0
\quad \text{and}\quad
\frac 1{q_j'}\le  \frac 1{p_j} \leq \frac12.
\end{equation}
Let $\omega _j \in \mascP _E(\rr {4d})$, $j=0,1,\dots ,N$, and suppose
\eqref{weightcondAcalc} holds. Then the map
\eqref{Eq:N-LinPsProd} from $\maclS _s(\rr {2d}) \times \cdots \times
\maclS _s(\rr {2d})$ to $\maclS _s(\rr {2d})$
extends uniquely to a continuous and associative map
from $M ^{p_1,q_1} _{(\omega _1)}(\rr {2d}) \times \cdots
\times M ^{p_N,q_N} _{(\omega _N)}(\rr {2d})$ to
$M^{p_0',q_0'} _{(1/\omega _0)}(\rr {2d})$.
\end{thm}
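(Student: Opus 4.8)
The plan is to reduce Theorem \ref{Thm:Alg2} to the already-available composition results for the Weyl (or Kohn--Nirenberg) calculus via the calculus transfer formula \eqref{Eq:CalculiTransfComp}, exactly as Theorem \ref{ThmA.2} and \ref{ThmA.3} were reduced to their $A=0$ or $A=\frac12 I$ counterparts. First I would fix $B$ to be the reference matrix for which the composition theorem is known (in the cited works \cite{CoToWa,CTW,HTW} the Weyl case $B=\frac12 I$ is treated), and rewrite the $N$-linear product as in \eqref{Eq:CalculiTransfComp}:
\begin{equation*}
a_1\wpr _A \cdots \wpr _A a_N
=
T_{A-B}^{\ -1}\big ( (T_{A-B}a_1)\wpr _B \cdots \wpr _B (T_{A-B}a_N) \big ).
\end{equation*}
By Proposition \ref{Prop:ExpOpSTFT}, each $T_{A-B}$ is a homeomorphism from $M^{p_j,q_j}_{(\omega_j)}$ onto $M^{p_j,q_j}_{(\widetilde\omega_j)}$, where $\widetilde\omega_j(x,\xi,\eta,y)=\omega_j((x,\xi,\eta,y)+\text{shift by }(A-B))$ in the precise form displayed there (with $A$ replaced by $A-B$). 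So the statement for general $A$ is equivalent to the statement for $B$ with the weights $\omega_j$ replaced by $\widetilde\omega_j$, and one only has to check that condition \eqref{weightcondAcalc} for $A$ and $\{\omega_j\}$ is equivalent to the corresponding condition for $B$ and $\{\widetilde\omega_j\}$.

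The second step is precisely that weight bookkeeping: unwinding the definition \eqref{Ttdef} of $T_A(X,Y)$ and the definition of $\omega_{A-B}$ from Proposition \ref{Prop:ExpOpSTFT}, one verifies the algebraic identity
\begin{equation*}
\omega_j\big(T_A(X_j,X_{j-1})\big) \asymp \widetilde\omega_j\big(T_B(X_j,X_{j-1})\big),
\qquad X_0,\dots,X_N\in\rr{2d},
\end{equation*}
and similarly for $\omega_0$ with $(X_N,X_0)$. Granting this, \eqref{weightcondAcalc} transfers verbatim, and Theorem \ref{Thm:Alg2} for $A$ follows from the known composition theorem for $B$ applied to $(\widetilde\omega_0,\dots,\widetilde\omega_N)$. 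The conditions \eqref{pqconditions3} on $(p_j,q_j)$ involve only the Lebesgue exponents, which are untouched by $T_{A-B}$, so they are imported unchanged. Associativity is inherited because $T_{A-B}$ is linear and invertible and intertwines $\wpr_A$ with $\wpr_B$, so an associativity relation for $\wpr_B$ pulls back to one for $\wpr_A$.

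The main obstacle is simply making the translate-and-twist identity for $T_A$ versus $T_B$ come out cleanly: one must track how the affine substitution hidden in $T_{A-B}$ (acting on the phase-variables $(x,\xi,\eta,y)$) interacts with the phase-space ``composition vertices'' $T_A(X_j,X_{j-1})$, and confirm that the cross terms reorganize exactly into $\widetilde\omega_j\circ T_B$ with no leftover factors. This is a finite, explicit linear-algebra computation with $2d$-dimensional blocks and transposes $A^*$, so it is routine but error-prone; once it is in hand the rest is an invocation of Proposition \ref{Prop:ExpOpSTFT}, Proposition \ref{Prop:CalculiTransfer} and the quoted composition theorems in the Weyl setting. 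As a fallback, if one does not want to single out a reference matrix, the same reduction can be run directly at the level of the identity \eqref{Eq:CalculiTransfComp} combined with the density of $\maclS_s(\rr{2d})$ and the a priori bounds, deducing the general-$A$ product estimate by continuity from the $\Sigma_1$-valued $N$-linear map and the isometry-type estimate \eqref{Eq:ExpOpModSp}.
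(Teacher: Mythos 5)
Your proposal is correct and follows essentially the same route as the paper: the author also reduces to the Weyl case $A=\tfrac12 I$ of Theorems 0.1$'$ and 2.9 in \cite{CoToWa}, invoking \eqref{Eq:CalculiTransfComp} together with Proposition \ref{Prop:ExpOpSTFT} to transfer the result (and the weight condition \eqref{weightcondAcalc}) to general $A$, with the exponent conditions untouched. Your more explicit account of the weight bookkeeping is just the detail the paper dismisses as ``straightforward.''
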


\par

\begin{proof}[Proof of Theorems \ref{Thm:Alg1} and \ref{Thm:Alg2}]
The result follows immediately from Theorems 0.1$'$ and 2.9 in
\cite{CoToWa} in the Weyl case, $A=\frac 12I$. For general $A$
the result now follows by from the Weyl case and straightforward
applications of \eqref{Eq:CalculiTransfComp} and
Proposition \ref{Prop:ExpOpSTFT}. 
\end{proof}

\par

\section{An idea of quantization}\label{sec3}

\par

In this section we make a suitable average of $\op_A(a)$ with respect to the matrix
$A$ to deduce certain types of operators, related to the symbols $a$, and which might
be of interests in quantizations.

\par

We recall that a quantization is a rule which takes an observable
$a(x,\xi )$ in classical mechanics to the corresponding observable
$\op (a)$ in quantum mechanics. Usually, $a$ is a suitable
function or distribution defined on the phase space $\rr {2d}$, and
$\op (a)$ is an operator which acts on suitable dense subspaces
of $L^2(\rr d)$.

\par

A common quantization is the Weyl quantization, $a\mapsto \op ^w(a)$,
explained earlier. Another quantization rule is the Born-Jordan quantization,
$a\mapsto \op _{\BJ} (a)$, where
\begin{equation}\label{Eq:BJDef}
\op _{\BJ} (a) = \int _0^1 \op _t(a)\, dt = \int _{-1/2}^{1/2} \op _{(t+\frac 12)\cdot I}(a)\, dt,
\end{equation}
provided the right-hand side makes sense. By straight-forward computations
it follows that
$$
\op _{BJ} (a) = \op ^w(\Phi *a),\qquad \Phi (x,\xi ) = \sinc (\scal x\xi /2).
$$

\par

We shall now consider other candidates of quantization, where the average
on the right-hand side in \eqref{Eq:BJDef} over all matrices $t\cdot I$,
$0\le t\le 1$, is replaced by averages over all $r\cdot U$, $U\in \UN 
(d,\mathbf R)$, and $r$ is fixed or taken over certain interval
$I\subseteq \mathbf R_+$. Here $\UN (d,\mathbf R)=\UN _d$ is the set of all
$d\times d$ orthonormal matrices with entries in $\mathbf R$.

\par

More precisely, for fixed $r\ge 0$, we let
\begin{align*}
\op _{r,\UN}(a) &\equiv \left ( \int _{U\in \UN _d} \, dU \right )^{-1}
\int _{U\in \UN _d} \op _{rU+\frac 12I}(a)\, dU,
\\[1ex]
\op _{r,\UN}^0(a) &\equiv r^{-1}\int _0^r \op _{t,\UN}(a)\, dt,
\\[1ex]
\psi _d(\rho )
&\equiv
\begin{cases}
(-i)^{\frac {d-2}2}\Gamma (\frac d2)\cdot
\displaystyle{\frac {J_{(d-2)/2}(i\rho )}{\rho ^{\frac {d-2}2}}}, & d>1
\\[1ex]
\cosh (\rho ), & d=1,
\end{cases}
\intertext{and}
\psi _{0,d}(r) &\equiv \int _0^r \psi _d(t)\, dt
\end{align*}
where $J_\nu$ is the Bessel function of order $\nu \in \mathbf R$.

\par

By straight-forward computations it follows that
\begin{alignat*}{3}
\op _{r,\UN}(a) &= \op ^w(a*\Psi _r),&
\quad &\text{where} &\quad \Psi _r(x,\xi ) &= \psi _d(r|x|\, |\xi |),
\intertext{and}
\op _{r,\UN}^0(a) &= \op ^w(a*\Psi _r^0),&
\quad &\text{where} & \quad \Psi _r^0(x,\xi ) &= \frac {\psi _{0,d} (r|x|\, |\xi |)}{r |x|\, |\xi |}.
\end{alignat*}

\par

\vspace{1cm}

\end{document}